%
%

\newif\ifpictures
\picturestrue

\documentclass[12pt]{amsart}
\usepackage{amssymb}
\usepackage{amsmath}
\usepackage{amscd}
\usepackage[dvips]{graphicx}
\usepackage{caption}
\usepackage{subcaption}
\usepackage{book tabs}
\usepackage{hyperref}
\usepackage{dsfont}
\usepackage{float}

\ifpictures
\usepackage{psfrag}
\fi

\headheight=8pt
\topmargin=30pt 
\textheight=611pt     \textwidth=456pt
\oddsidemargin=6pt   \evensidemargin=6pt


\frenchspacing

\numberwithin{equation}{section}
\newtheorem{thm}{Theorem}
\newtheorem{prop}[thm]{Proposition}
\newtheorem{lemma}[thm]{Lemma}
\newtheorem{cor}[thm]{Corollary}

\theoremstyle{definition}

\newtheorem{example}[thm]{Example}
\newtheorem{remark1}[thm]{Remark}
\newtheorem{openproblem1}[thm]{Open problem}

\newtheorem{condition}[thm]{Condition}

\newenvironment{ex}{\begin{example}\rm}{\hfill$\Box$\end{example}}

\numberwithin{thm}{section}

\newcounter{FNC}[page]
\def\newfootnote#1{{\addtocounter{FNC}{2}$^\fnsymbol{FNC}$%
     \let\thefootnote\relax\footnotetext{$^\fnsymbol{FNC}$#1}}}

\newcommand{\bs}{\backslash}

\newcommand{\N}{\mathbb{N}}

\newcommand{\R}{\mathbb{R}}

\renewcommand{\P}{\mathbb{P}}

\newcommand{\VV}{\mathcal{V}}
\newcommand{\HH}{\mathcal{H}}

\newcommand{\pol}[1]{{#1}^{\circ}}

\DeclareMathOperator{\conv}{conv}

\DeclareMathOperator{\inter}{int}

\DeclareMathOperator{\qm}{QM}
\DeclareMathOperator{\pos}{pos}

\usepackage{xcolor}

\title[Containment of $\mathcal{H}$-polytopes in $\mathcal{V}$-polytopes]{Sum
of Squares Certificates for Containment of $\mathcal{H}$-polytopes in
$\mathcal{V}$-polytopes}

\author{Kai Kellner}
\author{Thorsten Theobald}

\address{Goethe-Universit\"at, FB 12 -- Institut f\"ur Mathematik,
Postfach 11 19 32, D--60054 Frankfurt am Main, Germany}

\email{\{kellner,theobald\}@math.uni-frankfurt.de}

\date{\today}
\begin{document}

\begin{abstract}
Given an $\HH$-polytope $P$ and a $\VV$-polytope $Q$, the decision problem
whether $P$ is contained in $Q$ is co-NP-complete. This hardness remains if
$P$ is restricted to be a standard cube and $Q$ is restricted to be the
affine image of a cross polytope. While this hardness classification by
Freund and Orlin dates back to 1985, for general dimension there seems 
to be only limited progress on that problem so far.

Based on a formulation of the problem in terms of a bilinear
feasibility problem, we study sum of squares certificates 
to decide the containment problem. These certificates can be
computed by a semidefinite hierarchy.
As a main result, we show that under mild and explicitly known
preconditions the semidefinite hierarchy converges in finitely many steps.
In particular, if $P$ is contained in a large $\mathcal{V}$-polytope $Q$ (in a
well-defined sense), then containment is certified by the first step of the
hierarchy.
\end{abstract}

\maketitle

\section{Introduction}

Convex polytopes (polytopes, for short)
can be represented as the convex hull of finitely many
points (``$\VV$-polytopes'') or as the intersection of finitely many
halfspaces (``$\HH$-polytopes''). For $ a \in \R^k,\ A \in \R^{k\times d} $,
and $ B = [ b^{(1)},\ldots,b^{(l)} ] \in \R^{d\times l} $ let
\[
  P = P_A = \left\{ x\in\R^d\ |\ a - Ax \geq 0 \right\} \text{ and }
  Q = Q_B = \conv( B ) = \conv( b^{(1)},\ldots,b^{(l)} )
\]
be an $\HH$-polytope and a $\VV$-polytope, respectively. The subscript in the
notion of $P$ and $Q$ indicates the dependency on the specific representation
of the polytopes involved. However, if there is no risk of confusion, we often
state $P$ and $Q$ without subscript.
The following two problems are prominent problems in algorithmic polytope
theory (see Kaibel and Pfetsch~\cite{kaibel-pfetsch-2003}).
We always assume that the polytope data is given in terms of rational numbers.

\medskip

\noindent
{\sc Polytope verification:} \\
{\bf Input:} $d \in \N$, an $\HH$-polytope $P \subseteq \R^d$ and a
$\VV$-polytope $Q \subseteq \R^d$. \\
{\bf Task:} Decide whether $P=Q$.

\medskip

\noindent
{\sc Polytope containment} (or {\sc $\HH$-in-$\VV$ containment}): \\
{\bf Input:} $d \in \N$, an $\HH$-polytope $P \subseteq \R^d$ and a
$\VV$-polytope $Q \subseteq \R^d$. \\
{\bf Task:} Decide whether $P \subseteq Q$.

\medskip

While the complexity status of the first problem is open, the second problem
is co-NP-complete (see Freund and Orlin~\cite{freund-orlin-85}); note that it
is trivial to decide the converse question $Q \subseteq P$. It is well known
that the problem of enumerating all facets of a polytope given by a finite set
of points (or, equivalently, enumerating all vertices of a polytope given by a
finite number of halfspaces) can be polynomially reduced to the {\sc Polytope
verification} problem (see Avis et. al.~\cite{abs-97}, Kaibel and
Pfetsch~\cite{kaibel-pfetsch-2003}). Note that enumerating the vertices of an
(unbounded) polyhedron is hard~\cite{kbb-2008}. 
While in fixed dimension, enumeration of the vertices of $P$ can be done in
polynomial time and gives a polynomial time algorithm for {\sc Polytope
containment} (cf.\ Theorem~\ref{co:fixeddim}), progress on approximation
results for the latter problem in general dimension seems to be limited so
far.

In this paper we study the {\sc Polytope Containment} problem. Our main focus
is to consider the problem from the viewpoint of the transition from
linear/polyhedral problems to low-degree semialgebraic problems. To that end,
we formulate the problem as a disjointly constrained bilinear feasibility
problem and consider semialgebraic certificates.
In particular, the reformulation as a bilinear program allows to effectively
apply Putinar's Positivstellensatz~\cite{putinar1993}, which is the primary 
theorem underlying Lasserre's hierarchy~\cite{Lasserre2001} and gives sum of
squares certificates; see also Laurent's extensive
survey~\cite{laurent-survey}. 
The sum of squares can be computed by a hierarchy of semidefinite programs. 

The basic idea of the approach -- which is meanwhile common in polynomial
optimization, but whose understanding of the particular potential for
concrete problems is often challenging -- can be explained as follows.
One point of view towards linear programming is as an application of Farkas'
Lemma which characterizes the (non-)solvability of a system of linear
inequalities. The affine form of Farkas' Lemma~\cite[Corollary
7.1h]{Schrijver1986} characterizes linear polynomials which are nonnegative on
a given polyhedron.
By omitting the linearity condition, one gets a polynomial nonnegativity
question, leading to so called Positivstellens\"atze (or, more precisely,
Nichtnegativstellens\"atze). These Positivstellens\"atze provide a
\emph{certificate} for the positivity of a polynomial function in terms of a
polynomial identity. As in the linear case, the Positivstellens\"atze are the
foundation of polynomial optimization and relaxation methods
(see~\cite{Lasserre2001,lasserre-book, laurent-survey}).

The general machinery from polynomial optimization automatically implies
convergence results, but often these results come with restrictions or
technical assumptions. 

\medskip

\noindent
{\bf Our contributions:}

1. Based on a formulation of the {\sc Polytope containment} problem in terms
  of a bilinear problem (Proposition~\ref{prop:HinV}), we
  characterize geometric properties of a natural bilinear programming
  reformulation; see Corollary~\ref{cor:finsol}.

2. We study the application of sums of squares techniques on the bilinear
  programming formulation.
  An important point is whether the hierarchy always converges in finitely
  many steps. While in the case of strong containment (as defined in
  Section~\ref{se:bilinear}) this property is implied by Putinar's
  Positivstellensatz (Theorem~\ref{thm:archimedean}), in the case of
  non-strong containment this is a critical issue. 
  As a main result of this paper, we show that under mild and explicitly
  known conditions, the semidefinite hierarchy converges in finitely many
  steps (Theorem~\ref{thm:convergence2}) based on results by
  Marshall~(\cite{marshall2008,marshall2009}; see also Nie \cite{nie2012}). 

3. We exhibit structural differences between conventional methods for the
  {\sc Polytope containment} (such as vertex tracking methods) and our
  approach. Theorem~\ref{th:smallinbig} shows that the containment of
  polytopes in ``large'' polytopes (as quantified in the theorem) can already
  be certified in the initial step of the semidefinite hierarchy and thus by
  computing a semidefinite program of polynomial size in the input.

\medskip

While it is a fundamental geometric problem by itself, we mention some
exemplary application scenarios in which the {\sc Polytope containment}
problem occurs.
Generally, many applications in data analysis or shape analysis of point
clouds involve the convex hull of point sets (see, e.g.,
\cite{boehm-kriegel-2001}), and a {\sc Polytope containment} problem can be
used to answer questions about certain (polyhedral) properties on the set.

A specific example is theorem proving in linear real arithmetic: 
A sub-branch in theorem proving is based on formulas in linear real
arithmetic; see, e.g., \cite{cll-2003,monniaux2008}.
Given a set of (quantifier-free) linear inequalities of the form
$L_i(x_1,\ldots,x_d)\ge 0$ in the real variables $x_1, \ldots, x_d$ specifying
the assumptions of a certain theorem, one may ask whether all these solutions
satisfy a certain property $Q$.
If $Q$ is described as the convex hull of a finite number of points, then the
theorem proving problem corresponds to a {\sc Polytope containment} problem.

Let us briefly mention some related problems. Finding the largest simplex in
a $\VV$-polytope is an NP-hard problem~\cite{gkl-95}. However for that problem
Packer has given a polynomial-time approximation~\cite{packer-2004}. Recently,
Gouveia et.\ al. have studied the question which nonnegative matrices are
slack matrices~\cite{ggk-2013}, and they establish equivalence of the decision
problem to the polyhedral verification problem. For containment of polytopes
and spectrahedra see~\cite{Kellner2012,Kellner2013}. Joswig and
Ziegler~\cite{Joswig2004} showed that the {\sc Polytope verification} problem
is polynomially equivalent to a geometric polytope completeness problem.

The paper is structured as follows. After introducing the relevant notation in
Section~\ref{se:prelim}, we study geometric properties of a natural bilinear
programming formulation in Section~\ref{se:bilinear}.  
Section~\ref{sec:putinar} deals with sum of squares certificates for the {\sc
Polytope Containment} problem.
Finally, Section~\ref{se:openquestions} lists several open questions.

\section{Preliminaries\label{se:prelim}}

Recall that a \emph{polyhedron} $P$ is the intersection of finitely many
affine halfspaces in $\R^d$ and a bounded polyhedron is called a
\emph{polytope}~\cite{Ziegler1995}.
Denote by $V(P)$ the set of vertices of a polytope $P$, and by $F(P)$ the set
of facets. By McMullen's Upper bound Theorem~\cite{mcmullen70}, any
$d$-polytope with $k$ vertices (resp.\ facets) has at most
\[
  \binom{k-\left\lfloor\frac{1}{2}(d+1)\right\rfloor}{k-d} 
  + \binom{k-\left\lfloor\frac{1}{2}(d+2)\right\rfloor}{k-d}
\]
facets (resp.\ vertices).
This bound is sharp for neighborly polytopes such as cyclic polytopes.

Our model of computation is the binary Turing machine:
polytopes are given in terms of rational numbers, and the size of the input
is defined as the length of the binary encoding of the input data (see, e.g.,
\cite{gritzmann-klee-93}).
It is well-known that the complexity of deciding containment of one polytope
in another one strongly depends on the type of input representations.
In particular, the following hardness statement is known.

\begin{prop}[{\cite{freund-orlin-85,gritzmann-klee-93}}]
\label{prop:complexity}
The {\sc Polytope Containment} problem is co-NP-complete. 

This hardness remains if $P$ is restricted to be a standard cube and $Q$ is
restricted to be the affine image of a cross polytope.

If the dimension is fixed, then the problem of deciding whether an
$\HH$-polytope is contained in a $\VV$-polytope can be decided in polynomial
time.
\end{prop}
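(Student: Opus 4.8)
The plan is to establish three separate assertions, each requiring a distinct technique.

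\medskip

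\noindent\textbf{co-NP-completeness.}
First I would prove membership in co-NP. The complementary problem, deciding whether $P \not\subseteq Q$, is in NP: a certificate is simply a point $x \in P$ with $x \notin Q$. Given such a witness, one verifies $x \in P$ by checking the $k$ linear inequalities $a - Ax \geq 0$ (polynomial time), and one verifies $x \notin Q$ by solving a linear program testing whether $x$ lies in $\conv(b^{(1)},\ldots,b^{(l)})$, i.e.\ whether there exist $\lambda_i \geq 0$ with $\sum_i \lambda_i = 1$ and $\sum_i \lambda_i b^{(i)} = x$; infeasibility of this LP certifies $x \notin Q$. The only delicate point is to guarantee that a violating $x$ of polynomially bounded bit-size exists whenever $P \not\subseteq Q$; this follows from standard polyhedral bit-size estimates, since one may take $x$ to be a vertex of $P$ and vertices of a rational $\HH$-polytope have encoding length polynomial in the input (Cramer's rule bounds). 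For hardness, I would cite or reproduce the reduction of Freund and Orlin~\cite{freund-orlin-85} from a known co-NP-complete problem.

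\medskip

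\noindent\textbf{Hardness for the restricted cube/cross-polytope instances.}
This is the part I expect to be the main obstacle, since it is a genuine structural reduction rather than a routine verification. The claim is that the hard instances can be normalized so that $P$ is the standard cube $[0,1]^d$ (or $[-1,1]^d$) and $Q$ is an affine image of the cross polytope $\conv(\pm e_1,\ldots,\pm e_d)$. I would again defer to the Freund--Orlin construction~\cite{freund-orlin-85}: the essential idea is that testing whether the cube is contained in an affine cross polytope $Q = \{x : \|T x - c\|_1 \leq 1\}$ amounts to checking $\max_{x \in \{0,1\}^d} \|Tx - c\|_1 \leq 1$, and because the $\ell_1$-norm decouples over coordinates after the affine map, this maximization encodes an NP-hard combinatorial optimization (of subset-sum / partition flavour). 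The technical care lies in (i) ensuring the affine map $T$ has polynomial bit-size and (ii) arranging the threshold so that the instance is a yes-instance precisely when the underlying NP-hard problem is a no-instance, thereby witnessing co-NP-hardness.

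\medskip

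\noindent\textbf{Fixed dimension.}
For fixed $d$, I would argue that containment $P \subseteq Q$ holds if and only if every vertex of $P$ lies in $Q$. Since $P = P_A$ is given by $k$ halfspaces in fixed dimension $d$, the number of vertices is $O(k^{\lfloor d/2 \rfloor})$ by the Upper Bound Theorem quoted in Section~\ref{se:prelim}, and all vertices can be enumerated in polynomial time (for fixed $d$, e.g.\ by examining the $\binom{k}{d}$ candidate $d$-subsets of the defining hyperplanes). For each vertex $v$, membership $v \in Q$ is a single linear-programming feasibility test, solvable in polynomial time by Khachiyan's algorithm. Since there are polynomially many vertices and each test is polynomial, the whole procedure runs in polynomial time, completing the proof.
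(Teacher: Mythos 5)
Your proposal is correct, but note that the paper itself gives no proof of Proposition~\ref{prop:complexity}: it is presented as a known result, with the hardness attributed to Freund and Orlin~\cite{freund-orlin-85} (see also~\cite{gritzmann-klee-93}), so for the two hardness assertions there is no in-paper argument to compare against, and your deferral to the Freund--Orlin reduction matches the paper's treatment exactly. What you supply beyond the paper is the verification of co-NP membership (a violating point can be taken to be a vertex of $P$, which has polynomial encoding length since $P$ is assumed bounded, and non-membership in $\conv(B)$ is an LP infeasibility check); this argument is sound. One corrective remark on your sketch of the restricted hardness: the claim that ``the $\ell_1$-norm decouples over coordinates after the affine map'' is misleading --- the terms $|T_j x - c_j|$ are generally maximized at different vertices of the cube, and it is precisely this coupling that makes $\max_{x\in\{0,1\}^d}\|Tx-c\|_1$ hard; what decouples is only the cube's vertex set into independent binary choices, which is what permits encoding partition-type problems. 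Since you lean on the citation for this part anyway, this looseness does not invalidate the proposal. Finally, your fixed-dimension argument (enumerate the polynomially many vertices of $P$, then test membership in $Q$ by linear programming) is essentially the same as the route the paper takes in the theorem immediately following the proposition, where fixed dimension of $P$ is handled by converting $P$ to a $\VV$-representation and deciding $\VV$-in-$\VV$ containment --- which amounts to exactly your vertex membership tests; the paper's theorem is slightly stronger in that it also covers the case where only the dimension of $Q$ is fixed, via an $\HH$-representation of $Q$, but that is not needed for the proposition as stated.
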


Note that the result for fixed dimension can be slightly strengthened.

\begin{cor}\label{co:fixeddim}
If the dimension of $P$ or the dimension of $Q$ is fixed, then containment of
an $\HH$-polytope $P$ in a $\VV$-polytope $Q$ can be decided in polynomial
time.
\end{cor}

\begin{proof}
If the dimension of $P$ is fixed, then first compute the affine hull of $P$. 
This can be done in polynomial time. Taking that affine hull as ambient space
in fixed dimension, $P$ can be transformed into a $\VV$-representation in
polynomial time. It remains to decide containment of a $\VV$-polytope in a
$\VV$-polytope, which can be done in polynomial time.

Similarly, if the dimension of $Q$ is fixed, an $\HH$-representation of $Q$
can be computed in polynomial time, and the resulting problem of deciding
whether an $\HH$-polytope is contained in an $\HH$-polytope can be decided in 
polynomial time.
\end{proof}

Throughout the paper, we assume boundedness and nonemptyness of $P$ as
well as $0 \in \inter Q$,
where $\inter Q$ denotes the interior of $Q$.
All these properties can be tested in polynomial time~\cite{khachiyan1980}.
If $Q$ is full-dimensional and $0 \not\in \inter Q$, one can translate $Q$
and $P$ by the centroid of the vertices of $Q$. 
Recall that the polar polyhedron of $Q$ is 
\[
  \pol{Q} = \left\{ z \in \R^d\ |\ \mathds{1}_l - B^T z \geq 0 \right\} ,
\]
where $\mathds{1}_l$ denotes the all-1-vector in $\R^l$. Since we assume
$0 \in \inter Q$, $\pol{Q}$ is a polytope (i.e., bounded) and
$Q^{\circ\circ}=Q$.

\section{A bilinear approach to the
{\sc Polytope containment} problem\label{se:bilinear}}

We first collect some geometric properties of the {\sc Polytope containment}
problem. Our starting point is the following reformulation of the 
{\sc Polytope containment} problem as a bilinear problem.

\begin{prop} \label{prop:HinV}
Let the $\HH$-polytope $P = \left\{ x\in\R^d\ |\ a - Ax \geq 0 \right\}$
be nonempty and the $\VV$-polytope $Q=\conv(B)=\conv(b^{(1)},\ldots,b^{(l)})$
containing the origin in its interior.
\begin{enumerate}
  \item 
  $P$ is contained in $Q$ if and only if 
  $x^T z \le 1$ for all $(x,z) \in P \times Q^{\circ}$.
  That is, $P \subseteq Q$ if and only if the maximum
\begin{equation} \label{eq:hinv}
  \mu^* := \max \{ x^T z \, | \, (x,z) \in P \times\pol{Q} \} 
\end{equation}
is at most~1. 
  \item We have $\mu^* = 1$
  if and only if $P\subseteq Q$ and $\partial P\cap\partial Q\neq\emptyset$.
\end{enumerate}
\end{prop}

Motivated by the second statement, we say that $P$ is \emph{strongly contained}
in $Q$ if $P \subseteq Q$ and $\partial P\cap\partial Q =\emptyset$.
Since $Q$ is full-dimensional, this is equivalent to $P \subseteq\inter Q$.
Note that strong containment differs from (set-theoretic) strict inclusion as
the latter allows common boundary points while strong containment does not.

\begin{proof}
To (1):
If $P\subseteq Q$ then for any $x \in P$ we have $x^T z\leq 1$ for all
$z\in\pol{Q}$. Conversely, if $x^T z\leq 1$ holds for all $z\in\pol{Q}$, then
for any $x\in P$ we have $x\in Q^{\circ\circ} = Q$. 

To (2):
Let $P\subseteq Q$ and $\partial P \cap\partial Q$ be nonempty. Then there
exists a vertex $v\in V(P)$ and a facet $F \in F(Q)$ such that $v\in F$.
Since $0 \in \inter Q$, $F$ defines a vertex $f$ of the polar $Q^{\circ}$.
Further $f^T v = 1$ implies that the maximum is at least one. By part (1) of the
statement, the maximum must be exactly one.

Conversely, if the maximum is one, then $x^T z\leq 1$ for all 
$(x,z)\in P\times\pol{Q}$. Therefore, since the set $P\times\pol{Q}$ is 
compact, there exists a point 
$(\bar{x},\bar{z})\in P\times\pol{Q}$ such that $\bar{x}^T \bar{z} = 1$.
Hence $\bar{x}^T z\leq 1$ for all $z\in\pol{Q}$ and $\bar{x}^T \bar{z} = 1$,
i.e., $\bar{x}$ defines a supporting hyperplane of $\pol{Q}$. Thus $\bar{x}$
is a boundary point of $Q$. Similarly, $x^T \bar{z}\leq 1$ for all $x\in P$
and $\bar{x}^T \bar{z} = 1$, implying $\bar{x}\in\partial P$.
Consequently, $\bar{x} \in \partial Q \cap\partial P $.
\end{proof}

The following characterization of the optimal solutions to~\eqref{eq:hinv} is
a slight extension of a result by Konno~\cite{Konno1976} on bilinear
programming.

\begin{prop} \label{prop:bilin1}
Let $\inter P \neq \emptyset$ and $0 \in \inter Q$.
Then the set of optimal solutions to~\eqref{eq:hinv} is a set of proper faces
$F\times G$ of $P\times\pol{Q}$, and the maximum is attained at a
pair of vertices of $P$ and $\pol{Q}$. 
\end{prop}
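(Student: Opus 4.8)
The plan is to treat the two assertions of the proposition---finiteness together with attainment at a vertex pair, and the facial structure of the optimal set---largely separately, exploiting throughout that $\pol{Q}$ is a genuine polytope. \emph{First}, since the origin is an interior point of $Q$, the polar $\pol{Q}$ is bounded, hence a polytope, and moreover $0\in\inter\pol{Q}$. In particular $P\times\pol{Q}$ is compact, so the continuous objective $x^Tz$ attains its supremum and $\mu^*<\infty$.

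For attainment at a vertex pair I would pass to the support function of $\pol{Q}$. Writing $\psi(x):=\max_{z\in\pol{Q}}x^Tz=\max_{w\in V(\pol{Q})}x^Tw$, the function $\psi$ is a maximum of finitely many linear functions, hence convex and piecewise linear, and $\mu^*=\max_{x\in P}\psi(x)$. A convex function attains its maximum over a polytope at a vertex, so $\mu^*=\psi(v)$ for some $v\in V(P)$; then $\psi(v)=\max_{w\in V(\pol{Q})}v^Tw$ is attained at some $w\in V(\pol{Q})$, giving $\mu^*=v^Tw$ with $v,w$ vertices of $P$ and $\pol{Q}$, as claimed.

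The heart of the proof is the facial structure, and here I would use that every face of the product $P\times\pol{Q}$ is of the form $F\times G$ with $F$ a face of $P$ and $G$ a face of $\pol{Q}$. Fix an optimal pair $(x^*,z^*)$ and let $F$ and $G$ be the \emph{minimal} faces of $P$ and $\pol{Q}$ containing $x^*$ and $z^*$. Since $x^*$ maximizes the linear functional $z^*$ over $P$, its maximizers form an exposed face containing $x^*$, and as $F$ is the minimal face containing $x^*$ this exposed face contains all of $F$; hence $x'^Tz^*=\mu^*$ for every $x'\in F$, and symmetrically $x^{*T}z'=\mu^*$ for every $z'\in G$. Expanding bilinearly and using these identities gives $x'^Tz'=\mu^*+(x'-x^*)^T(z'-z^*)$ for $x'\in F$, $z'\in G$. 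The main obstacle is to show the cross term vanishes: feasibility of $(x',z')$ forces it to be $\le 0$, and---this is exactly where the choice of \emph{minimal} faces is essential---because $x^*\in\operatorname{relint}F$ the point $x^*-\varepsilon(x'-x^*)$ also lies in $F$ for small $\varepsilon>0$, and pairing it with $z'$ forces the cross term to be $\ge 0$ as well. Thus $x'^Tz'=\mu^*$ on all of $F\times G$, so each such $F\times G$ lies in the optimal set, and the optimal set is precisely the union of these faces.

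Finally, for properness I would note that $0\in\inter\pol{Q}$ together with $P\neq\{0\}$ yields $\mu^*>0$, since a nonzero $x\in P$ pairs positively with a small multiple of itself lying in $\pol{Q}$; on the other hand $z=0\in\pol{Q}$ gives objective value $0$. Hence $x^Tz$ is non-constant on $P\times\pol{Q}$, so no optimal face $F\times G$ can equal the whole product, i.e. all the faces produced above are proper. I expect the vanishing of the bilinear cross term in the third step to be the only genuinely delicate point---the naive choice of exposed faces does not suffice, and one must invoke the relative interiors supplied by minimal faces---while the remaining steps are standard facts about support functions, convex maximization over polytopes, and faces of products.
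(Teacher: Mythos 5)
Your proof is correct, and it differs from the paper's in two respects worth noting. For attainment at a vertex pair, the paper argues by alternating linear programs: fixing an optimal $(\bar{x},\bar{z})$, it maximizes $\bar{x}^Tz$ over $\pol{Q}$ to obtain a vertex $\hat{z}$, then maximizes $x^T\hat{z}$ over $P$ to obtain a vertex $\hat{x}$, and concludes $\hat{x}^T\hat{z}\ge\bar{x}^T\bar{z}$; you instead pass to the support function $\psi(x)=\max_{z\in\pol{Q}}x^Tz$ and invoke maximization of a convex function over a polytope -- the two arguments are equally elementary and essentially interchangeable. The more substantive difference is in the facial structure. The paper, like you, takes the minimal faces $F_{\bar{x}}$, $F_{\bar{z}}$ of an optimal pair, but it only records that every $x\in F_{\bar{x}}$ satisfies $\sup_z x^Tz=\mu^*$ and every $z\in F_{\bar{z}}$ satisfies $\sup_x x^Tz=\mu^*$, and then asserts that $F_{\bar{x}}\times F_{\bar{z}}$ lies in the optimal set. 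As you correctly point out, this last step does not follow formally from those two facts alone: one must show that $x'^Tz'=\mu^*$ for \emph{arbitrary} pairs $(x',z')\in F_{\bar{x}}\times F_{\bar{z}}$, i.e.\ that the bilinear cross term $(x'-\bar{x})^T(z'-\bar{z})$ vanishes. Your argument -- feasibility gives the cross term $\le 0$, and perturbing $\bar{x}$ within the relative interior of its minimal face in the direction $-(x'-\bar{x})$ gives $\ge 0$ -- supplies exactly the justification the paper leaves implicit, and it is the right use of minimality (mere exposed faces would not allow the backward perturbation). Finally, for properness you argue directly that $\mu^*>0$ while the value $0$ is attained (so the optimal set is not the whole product), whereas the paper argues that optimal solutions lie in $\partial P\times\partial\pol{Q}$; both routes silently require $P\neq\{0\}$ (if $P=\{0\}$ the optimal set is all of $P\times\pol{Q}$), an edge case the paper's statement also glosses over, and which you at least flag explicitly.
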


For the convenience of the reader, we recall the short proof.

\begin{proof}
Let $ (\bar{x},\bar{z}) \in P \times\pol{Q} $ be an optimal solution.
Then the set $G = \{z \in \pol{Q} \, | \, \bar{x}^T z = \bar{x}^T \bar{z} \}$
is a non-empty face of $\pol{Q}$. 
For all $\hat{z}\in G$, let $F$ be the set of maximizers of
$\max \{ x^T \hat{z} \, | \, x \in P\}$.
Consequently, for  $(\hat{x},\hat{z})\in F\times G$ we have
$ \hat{x}^T \hat{z} = \bar{x}^T \hat{z} = \bar{x}^T \bar{z}$
and, by the optimality of
$(\bar{x},\bar{z})$, $F\times G$ is contained in the set of optimal solutions.
To complete the proof of the first
part of the statement, note that every boundary point of a polytope 
has a unique minimal face containing it.

Since the set of optimal solutions is a set of proper faces of 
$P \times Q^{\circ}$, 
there exists an optimal pair of vertices of $P$ and $\pol{Q}$.
\end{proof}

There is a nice geometric interpretation of the latter proposition. Since, in
the case $0 \in\inter Q$, each vertex of $\pol{Q}$ corresponds to a facet of
$Q$ and vice versa, an optimal solution $(x,z)\in V(P)\times V(\pol{Q})$
of~\eqref{eq:hinv} yields a pair of a vertex of $P$ and a facet defining
normal vector of $Q$.
However, since computing the set of vertices $V(\pol{Q})$ is an NP-hard
problem, it is not reasonable to reduce the problem to the set of vertices in
general. 

The optimal value of problem~\eqref{eq:hinv} might be attained by other
boundary points than vertices and, moreover, there might be infinitely many
optimal solutions. From a geometric point of view, this only occurs in
somewhat degenerate cases.

\begin{cor} \label{cor:finsol}
Let $\inter P \neq \emptyset$ and $0 \in \inter Q$.
Problem~\eqref{eq:hinv} has finitely many optimal solutions if and only if
every optimal solution of Problem~\eqref{eq:hinv} is a pair of vertices of $P$
and $\pol{Q}$.
\end{cor}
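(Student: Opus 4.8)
The plan is to prove the two implications separately, relying crucially on the face-theoretic description of the optimal set established in (the proof of) Proposition~\ref{prop:bilin1}. Recall first that the hypotheses $\inter P \neq \emptyset$ and $0 \in \inter Q$ place us exactly in the setting of that proposition: both $P$ and $\pol{Q}$ are bounded polytopes with finitely many vertices and a well-defined face lattice, so that the relevant notions are all available.

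For the easy direction ($\Leftarrow$), I would suppose that every optimal solution of~\eqref{eq:hinv} is a pair of vertices. Then the set of optimal solutions is a subset of $V(P) \times V(\pol{Q})$. Since $P$ and $\pol{Q}$ each have finitely many vertices, this product is finite, and hence so is the optimal set. No further work is needed here.

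For the converse ($\Rightarrow$), I would argue by contraposition: assume that some optimal solution $(\bar{x},\bar{z})$ is \emph{not} a pair of vertices and show that~\eqref{eq:hinv} then has infinitely many optimal solutions. Without loss of generality $\bar{x}$ is not a vertex of $P$ (the case that $\bar{z}$ is not a vertex of $\pol{Q}$ is symmetric). Let $F_{\bar{x}}$ (resp.\ $F_{\bar{z}}$) be the minimal face of $P$ (resp.\ $\pol{Q}$) containing $\bar{x}$ (resp.\ $\bar{z}$), as in the proof of Proposition~\ref{prop:bilin1}. Since $\bar{x}$ is not a vertex, it lies in the relative interior of a face of dimension at least one, so $F_{\bar{x}}$ is a positive-dimensional face and in particular an infinite set. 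The proof of Proposition~\ref{prop:bilin1} shows that the whole product $F_{\bar{x}} \times F_{\bar{z}}$ is contained in the optimal set; restricting to the infinite subset $F_{\bar{x}} \times \{\bar{z}\}$ already exhibits infinitely many optimal solutions, contradicting finiteness.

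The argument is short, so there is no real obstacle beyond getting two ingredients right: the invocation of the key claim from Proposition~\ref{prop:bilin1}—that the product of the minimal faces containing the two coordinates of any optimal point lies entirely in the optimal set—and the elementary observation that a point of a polytope fails to be a vertex precisely when its minimal face has positive dimension and is therefore infinite. The standing assumptions $\inter P \neq \emptyset$ and $0 \in \inter Q$ enter only to guarantee this face decomposition of the optimal set.
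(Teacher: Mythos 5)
Your proof is correct and follows exactly the route the paper intends: the corollary is stated as an immediate consequence of Proposition~\ref{prop:bilin1}, whose proof establishes precisely the two facts you invoke (that $F_{\bar{x}}\times F_{\bar{z}}$ lies in the optimal set, and that optimal vertex pairs exist). Your contrapositive argument, using that a non-vertex point has a positive-dimensional (hence infinite) minimal face, is the intended justification.
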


As there always exists a pair of vertices that is an optimal solution
to~\eqref{eq:hinv}, it is natural to ask for vertex tracking algorithms.
This is the approach in, e.g.,~\cite{Gallo1977,Konno1976}.
So far, no converging algorithm is known based on this approach. 
Note that the formulation as a bilinear programming problem from
Proposition~\ref{prop:HinV} also allows to apply existing nonlinear 
programming techniques for non-convex quadratic optimization (see, e.g., 
\cite{chen-burer-2012}).
In the next section, we study the bilinear reformulation of the {\sc
Polytope containment} from the viewpoint of algebraic certificates and
semidefinite relaxations yielding a generically convergent algorithm.

\section{Sum of squares certificates}
\label{sec:putinar}

In this section, we study sum of squares techniques for the {\sc Polytope
containment} problem. Our main goal is to show that in the situation of
Corollary~\ref{cor:finsol}, the corresponding semidefinite hierarchy yields 
a certificate for containment after finitely many steps; see
Theorems~\ref{thm:archimedean} and~\ref{thm:convergence2}.

\subsection{Putinar's Positivstellensatz}

Consider a set of polynomials $ G = \{ g_1,\ldots,g_k \} \subseteq \R[x] $ in the
variables $ x = (x_1,\ldots,x_d) $. The quadratic module generated by $G$ is
defined as 
\[
  \qm(G) = 
  \left\{ \sigma_0 + \sum_{i=1}^k \sigma_i g_i \ 
  |\ \sigma_i \in\Sigma[x] \right\} ,
\]
where $ \Sigma[x] \subseteq \R[x] $ is the set of sum of squares polynomials.
Here, a polynomial $ p\in\R[x] $ is called \emph{sum of squares} (sos) if it
can be written in the form $ p = \sum_i h_i(x)^2 $ for some $h_i \in\R[x]$.
Equivalently, $p$ has the form $ [x]^T Q [x] $, where $[x]$ is the vector
of all monomials in $x$ up to half the degree of $p$ and $Q$ is a positive
semidefinite matrix of appropriate size. Checking whether a polynomial is sos
is a semidefinite feasibility problem.

Obviously, every element in $\qm(G)$ is nonnegative on the semialgebraic set
$S = \{x\in\R^d\ |\ g(x)\geq 0\ \forall g\in G\}$. In~\cite{putinar1993}
Putinar showed that for positive polynomials the converse is true under some
regularity assumption.

A quadratic module $\qm(G)$ is called \emph{Archimedean} if there is a
polynomial $ p \in \qm(G) $ such that the level set 
$ \{x\in\R^d\ |\ p(x)\geq 0 \} $ is compact, or, equivalently, the polynomial
$N-(x_1^2+\dots+x_d^2) \in\qm(G)$ for some positive integer $N$; see
Marshall's book~\cite{marshall2008} for more equivalent characterizations.

\begin{prop}[{Putinar's Positivstellensatz~\cite{putinar1993}. See
also~\cite[Theorem 5.6.1]{marshall2008}}] \label{prop:putinar}
Let $S=\{x\in\R^d\ |\ g(x)\geq 0\ \forall g\in G\}$ for some finite
subset $G\subseteq\R[x]$. If the quadratic module $\qm(G)$ is Archimedean,
then $\qm(G)$ contains every polynomial $f\in\R[x]$ positive on $S$.
\end{prop}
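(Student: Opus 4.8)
The plan is to argue by contradiction and to convert the abstract nonnegativity hypothesis into a genuine integral representation, following Putinar's functional-analytic route. Suppose $f>0$ on $S$ but $f\notin M:=\qm(G)$. The Archimedean assumption says $N-(x_1^2+\dots+x_d^2)\in M$ for some $N$, and from this one first extracts the order-unit property: for every $p\in\R[x]$ there is an integer $N_p$ with $N_p\pm p\in M$, since the set of such $p$ is a subring of $\R[x]$ containing each $x_i$ and hence all of $\R[x]$. This makes $1$ an algebraic interior (internal) point of the convex cone $M$: given $p$ and $t\in(0,1/N_p]$ one has $1+tp=(1-tN_p)\cdot 1+t\,(N_p+p)\in M$, and likewise $1-tp\in M$. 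A Hahn--Banach separation argument in the vector space $\R[x]$ (Eidelheit's theorem, which only needs an internal point) then produces a nonzero linear functional $L\colon\R[x]\to\R$ with $L\ge 0$ on $M$ and $L(f)\le 0$; rescaling gives $L(1)=1$.

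Second, I would turn $L$ into a measure by a GNS-type construction. Because $p^2\in\Sigma[x]\subseteq M$, the symmetric bilinear form $\langle p,q\rangle:=L(pq)$ is positive semidefinite; quotienting by its kernel and completing yields a Hilbert space $H$ with cyclic vector $[1]$. The quantitative heart is the Archimedean bound: combining $N-\sum_i x_i^2\in M$ with the Cauchy--Schwarz inequality for $\langle\cdot,\cdot\rangle$ shows that each multiplication map $p\mapsto x_i p$ is symmetric and \emph{bounded}, so the operators $M_{x_1},\dots,M_{x_d}$ extend to commuting bounded self-adjoint operators on $H$. The joint spectral theorem supplies a projection-valued measure whose pairing with $[1]$ is a positive Borel measure $\mu$ on $\R^d$ with $L(p)=\int_{\R^d}p\,d\mu$ for all $p\in\R[x]$; boundedness of the operators forces $\mu$ to have compact support, and $\mu(\R^d)=L(1)=1$.

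Third, I would localize $\mu$ inside $S$ and close the argument. For each generator $g_i\in G$ and each $p$ one has $g_i p^2\in M$, whence $\int g_i p^2\,d\mu=L(g_i p^2)\ge 0$; letting $p$ range over all polynomials forces $g_i\ge 0$ $\mu$-almost everywhere, so $\mu$ is supported on $S$. Since $f>0$ on $S$ and $\mu$ is a nonzero positive measure concentrated there, $L(f)=\int_S f\,d\mu>0$, contradicting $L(f)\le 0$. This contradiction yields $f\in M$, proving the proposition.

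The main obstacle is the middle step: upgrading the merely linear, $M$-positive functional $L$ to an honest integral against a compactly supported measure. This is precisely where the Archimedean hypothesis is indispensable, since it is what bounds the multiplication operators and thereby delivers, via the spectral theorem, a measure with compact support automatically contained in the compact level set. I would expect the verification that the $M_{x_i}$ are bounded and that $g_i\ge 0$ $\mu$-a.e.\ to be the delicate points. An alternative, purely algebraic route avoids operators altogether: extend $M$ by Zorn's lemma to a maximal proper quadratic module still excluding $f$, show that maximality together with the Archimedean property makes the associated quotient a totally ordered Archimedean ring, and use that an Archimedean ordered field embeds uniquely into $\R$ to obtain a character $\varphi\colon\R[x]\to\R$ with $\varphi(g_i)\ge 0$ and $\varphi(f)\le 0$; the point $(\varphi(x_1),\dots,\varphi(x_d))\in S$ then again contradicts $f>0$ on $S$. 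Either way, the compactness forced by the Archimedean condition is the crucial ingredient.
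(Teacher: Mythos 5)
The paper never proves this proposition: it is imported from the literature, with the bracketed citation (Putinar~\cite{putinar1993}; Marshall~\cite[Theorem 5.6.1]{marshall2008}) standing in place of a proof. So your proposal is not competing with an in-paper argument; what you have written is a correct outline of the standard proofs from exactly those sources. Your main route --- extracting the order-unit property from the Archimedean hypothesis, Eidelheit separation of $f$ from the convex cone $\qm(G)$ using $1$ as an internal point, the GNS construction on $\langle p,q\rangle = L(pq)$, boundedness of the multiplication operators via $N-\sum_i x_i^2\in\qm(G)$ together with $\Sigma[x]\cdot\qm(G)\subseteq\qm(G)$, the joint spectral theorem yielding a compactly supported representing measure, and localization of that measure on $S$ --- is essentially Putinar's original functional-analytic argument, while the alternative you sketch (Zorn's lemma, maximal quadratic modules, embedding of Archimedean ordered fields into $\R$) is the purely algebraic proof given in Marshall's book. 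The steps you flag as delicate are the right ones and your handling of them is sound: boundedness follows since $(N-x_i^2)p^2\in\qm(G)$ gives $L(x_i^2p^2)\le N\,L(p^2)$, and the $\mu$-a.e.\ nonnegativity of each $g_i$ follows from $L(g_ip^2)\ge 0$ plus density of polynomials in $L^2(\mu)$, where compactness of $\supp\mu$ (supplied by the operator bounds) is what makes Stone--Weierstrass available. Two small points to make explicit in a full write-up: the subring argument for bounded elements needs the identity $2N(N^2-p^2)=(N+p)^2(N-p)+(N-p)^2(N+p)$ to get closure under multiplication, and in the separation step you need $L(1)>0$ before rescaling, which holds because $1$ is internal to $\qm(G)$ and $L$ is nonzero and nonnegative on $\qm(G)$.
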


The Archimedean condition in the proposition is not very restrictive.
Especially, in our case of interest where all polynomials $g_i$ are linear and
$S$ is compact, the condition is always fulfilled; 
see~\cite[Theorem 7.1.3]{marshall2008}. 

In order to apply Putinar's Positivstellensatz to polynomial optimization,
consider an optimization problem
\begin{align} \label{eq:pop}
  \sup\left\{ f(x)\ |\ g_i(x) \geq 0\, , \; i=1,\ldots, k \right\}
\end{align}
with $f,g_1,\ldots,g_k \in\R[x] $. Clearly, this is the same as to find
the infimum of $\mu$ such that $ \mu - f(x) \geq 0$ on the set $S$. A common way to
tackle the latter problem is to replace the nonnegativity condition by an sos
condition. This is a semi-infinite program since deciding membership can be
rephrased as a semi-infinite feasibility problem. In order to get a
(finite-dimensional) semidefinite program, we truncate the quadratic
module $\qm(G)$ by considering only monomials up to a certain degree $2t$,
\[
  \qm_t(G) = \left\{ \sigma_0 + \sum_{i=1}^k \sigma_i g_i \ 
    |\ \sigma_i \in\Sigma[x] \text{ with } \deg(\sigma_0)\leq 2t 
    \text{ and } \deg(\sigma_i g_i) \leq 2t \right\} .
\]
The $t$-th sos program has the form
\begin{align} \label{eq:sos}
  \mu(t) = \inf \left\{ \mu\ |\ \mu - f(x) \in\qm_t(G) \right\}.
\end{align}
Clearly, the sequence of \emph{truncated quadratic modules} is increasing with
respect to inclusion as $t$ grows. Thus the sequence of optimal values
$\mu(t)$ is monotone decreasing and bounded from below by the optimal
value of~\eqref{eq:pop}.
Generally, the infimum is not attained in~\eqref{eq:sos}.

The dual problem to~\eqref{eq:sos} can be formulated in terms of moment
matrices, again leading to an SDP relaxation of the polynomial optimization
problem~\eqref{eq:pop}. From a computational point of view it is often easier
(i.e., faster) to compute the dual side. This is because of the time consuming
process of extracting coefficients in a formal sos representation.
It is known that there is no duality gap between the primal and dual problem,
whenever the quadratic module is Archimedean and $S$ contains an interior
point~\cite[Theorem 5.21]{lasserre-book}.
Since we do not use the dual side here, we refer interested readers to
Lasserre's fundamental work~\cite{Lasserre2001}.

\subsection{Sum of squares certificates for {\sc Polytope containment}}
To keep notation simple, we denote the (truncated) quadratic module generated
by the linear constraints $a-Ax$ and $1-B^T z$ by $\qm_t(A,B)$.
The sos formulation of problem~\eqref{eq:hinv} reads as
\begin{align}
\begin{split} \label{eq:putinar}
  \mu(t) 
  &= \inf \left\{ \mu\ |\ \mu -x^T z \in \qm_t(A,B) \right\} .\\
\end{split}
\end{align}

Denote the $i$-th constraint defining $P\times\pol{Q}$ by $g_i$. Let 
$ \mu - x^T z = \sigma_0 + \sum_{i=1}^{k+l} \sigma_i g_i $ be an sos
representation. Assume $t=1$. Then monomials of degree at most 2 appear, i.e.,
$\deg(\sigma_0) \in\{0,2\}$ and $\deg(\sigma_i g_i) \leq 2$. Since 
$\deg(g_i) = 1$ and $\sigma_i$ is sos, $\sigma_i$ must be constant (otherwise
monomials of degree greater than 2 appear).
Thus $ \deg(\sum_i \sigma_i g_i) \leq 1$.
Moreover, if $\deg(\sigma_0) = 2$, then purely quadratic terms like $x_j^2$ or 
$z_j^2$ appear for some $j$ on the right-hand side while the coefficients of
these terms are zero on the left-hand side. 
As a consequence, the first order of the hierarchy making sense is
$t=2$. We call $t=2$ the \emph{initial step of the hierarchy}.

Asymptotic convergence of the hierarchy in the general case and finite
convergence in the strong containment case follow easily from the general
theory. 

\begin{thm} \label{thm:archimedean}
Let $P$ be a nonempty 
$\HH$-polytope and $Q$ be a $\VV$-polytope with $0\in\inter Q$.
\begin{enumerate}
  \item
  If $\mu(t) \leq 1 $ for some integer $ t\geq 2$, then $P\subseteq Q$.
  \item
  The hierarchy~\eqref{eq:putinar} converges asymptotically from above to the
  optimal value $\mu^*$ of problem~\eqref{eq:hinv}.
  \item 
  If $P$ is strongly contained in $Q$, then the hierarchy~\eqref{eq:putinar}
  decides the \textsc{Polytope Containment} problem in finitely many steps.
\end{enumerate}
\end{thm}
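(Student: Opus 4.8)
The plan is to treat the three parts in order, using throughout that every element of the truncated quadratic module $\qm_t(A,B)$ is nonnegative on the feasible set $P\times\pol{Q}$, together with Putinar's Positivstellensatz (Proposition~\ref{prop:putinar}).

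For part~(1) I would argue directly that the hierarchy always dominates $\mu^*$. If $\mu$ is feasible for~\eqref{eq:putinar}, then $\mu-x^Tz\in\qm_t(A,B)$ is nonnegative on $P\times\pol{Q}$, so $x^Tz\le\mu$ for every $(x,z)\in P\times\pol{Q}$ and hence $\mu^*\le\mu$. Taking the infimum over all feasible $\mu$ yields $\mu^*\le\mu(t)$, with no appeal to attainment of the infimum. Consequently $\mu(t)\le 1$ forces $\mu^*\le 1$, and Proposition~\ref{prop:HinV}(1) then gives $P\subseteq Q$.

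The other two parts rest on the Archimedean property. First I would observe that the feasible set is compact: $P$ is a bounded polytope by assumption, and since $0\in\inter Q$ the polar $\pol{Q}$ is a bounded polytope as well, so $P\times\pol{Q}$ is compact. As all generators $a-Ax$ and $\mathds{1}_l-B^Tz$ are linear, the quadratic module $\qm(A,B)$ is then Archimedean (see~\cite[Theorem 7.1.3]{marshall2008}), so Putinar's Positivstellensatz applies to polynomials strictly positive on $P\times\pol{Q}$. For part~(2), given $\varepsilon>0$ the polynomial $(\mu^*+\varepsilon)-x^Tz$ is bounded below by $\varepsilon$ on the feasible set, hence strictly positive; Putinar places it in $\qm(A,B)=\bigcup_{t}\qm_t(A,B)$, so it lies in some $\qm_t(A,B)$, which shows $\mu(t)\le\mu^*+\varepsilon$. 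Combined with the bound $\mu^*\le\mu(t)$ from part~(1) and the monotonicity of the $\mu(t)$, this gives $\mu(t)\downarrow\mu^*$.

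For part~(3) the key point is to convert strong containment into a strict inequality. Since $P$ is strongly contained in $Q$ we have $P\subseteq Q$ and $\partial P\cap\partial Q=\emptyset$; Proposition~\ref{prop:HinV}(1) gives $\mu^*\le 1$, while Proposition~\ref{prop:HinV}(2) rules out $\mu^*=1$, so $\mu^*<1$. Then $1-x^Tz\ge 1-\mu^*>0$ on the compact set $P\times\pol{Q}$, and applying Putinar exactly as above places $1-x^Tz$ in some $\qm_t(A,B)$ at a finite level $t$. For that $t$ the value $\mu=1$ is feasible for~\eqref{eq:putinar}, so $\mu(t)\le 1$, and part~(1) then certifies $P\subseteq Q$ after finitely many steps. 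The only genuine subtlety is the strict bound $\mu^*<1$: it is precisely the strong containment hypothesis, read through Proposition~\ref{prop:HinV}(2), that upgrades the nonnegativity $1-x^Tz\ge 0$ to the strict positivity needed to invoke Putinar at a finite order.
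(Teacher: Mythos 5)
Your proposal is correct, and for parts (1) and (3) it follows essentially the paper's own route: part (1) is what the paper dismisses as ``clear by construction'' (you spell out the domination $\mu^* \le \mu(t)$, correctly avoiding any appeal to attainment of the infimum), and part (3) is exactly the paper's argument --- strong containment plus Proposition~\ref{prop:HinV}(2) gives $\mu^* < 1$, so $1 - x^Tz$ is strictly positive on the compact feasible set, and the Archimedean property (linear generators, compact feasible set, \cite[Theorem 7.1.3]{marshall2008}) lets Putinar produce a certificate at a finite truncation level. The genuine divergence is in part (2): you give the standard Lasserre convergence argument, applying Putinar directly to $(\mu^* + \varepsilon) - x^Tz$, which is strictly positive on $P \times \pol{Q}$, and concluding $\mu(t) \le \mu^* + \varepsilon$ for some finite $t$. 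The paper instead proves (2) by a one-line reduction to part (3): ``blowing up'' $Q$ by a factor slightly larger than $\mu^*$ so that $P$ becomes strongly contained in the scaled polytope, and then transporting the resulting certificate back through the scaling. Your direct $\varepsilon$-argument is more explicit and self-contained; the paper's blow-up trick is shorter but leaves implicit the fact that the linear change of variables $z \mapsto cz$ maps the truncated quadratic module for the scaled polar onto that for $\pol{Q}$ without raising degrees. Both are legitimate, and both ultimately rest on the same Archimedean/Putinar machinery.
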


\begin{proof}
The first statement is clear by construction of the hierarchy.

Consider the second statement.
Since all constraints are linear in $x,z$ and the feasible region is bounded,
the quadratic module generated by the constraints of problem~\eqref{eq:hinv}
is Archimedean~\cite[Theorem 7.1.3]{marshall2008} and thus contains all 
polynomials $f(x,z)\in\R[x,z]$ positive on $P\times\pol{Q}$ by
Putinar's Positivstellensatz~\ref{prop:putinar}. 
Let $\mu^*$ be the optimal value of problem~\eqref{eq:hinv}.
Then $\mu^* - x^T z \ge 0$ on $P\times\pol{Q}$ and hence 
$\mu^* +\epsilon - x^T z \in\qm(A,B)$ for all $\epsilon>0$.

If $P$ is strongly contained in $Q$, then $\mu^*<1$ and thus $1-x^T
z\in\qm(A,B)$.
\end{proof}

A priori it is not clear whether in the non-strong case finite convergence
holds.
In fact, for general polynomials, there are examples where finite
convergence is not possible.
As our main result, we provide a partial extension of
Theorem~\ref{thm:archimedean} to the case where the bilinear optimization
problem~\eqref{eq:hinv} has only finitely many optimal solutions (as
characterized in Corollary~\ref{cor:finsol}).

\begin{thm} \label{thm:convergence2}
Let $P = \left\{x\in\R^d\ |\ a-Ax\geq 0 \right\} $ be a 
full-dimensional $\HH$-polytope and let $Q = \conv(B)$ be a $\VV$-polytope containing the
origin in its interior. Assume that one of the equivalent statements in
Corollary~\ref{cor:finsol} holds. Then $\mu^* - x^T z \in\qm(A,B)$, and
thus the hierarchy~\eqref{eq:putinar} decides the \textsc{Polytope
Containment} problem in finitely many steps.
\end{thm}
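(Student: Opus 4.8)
The plan is to recast \eqref{eq:hinv} as the minimization of the bilinear objective $f(x,z) = -x^T z$ over the product polytope $P\times\pol{Q}$ and to invoke the finite-convergence theorem of Nie~\cite{nie2012}. Since $P\times\pol{Q}$ is compact and all defining polynomials are linear, the quadratic module $\qm(A,B)$ is Archimedean, so Putinar's Positivstellensatz applies; Nie's refinement states that Lasserre's hierarchy terminates finitely --- equivalently, that $\mu^* - x^T z \in\qm(A,B)$ --- provided that at every global minimizer the linear independence constraint qualification, strict complementarity, and the second-order sufficiency condition all hold. By Proposition~\ref{prop:bilin1} the minimizers are pairs $(\hat x,\hat z)$ of boundary points, and by the standing hypothesis together with Corollary~\ref{cor:finsol} each such minimizer is in fact a pair of vertices of $P$ and $\pol{Q}$. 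Thus the whole argument reduces to checking Nie's three conditions at these vertex pairs; by Lemma~\ref{lem:redundant} I may assume throughout that $P$ and $Q$ are given by their irredundant representations, so that only facet inequalities and vertices occur.

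The second-order condition is the easiest, and is where the bilinear structure helps. The constraints are linear, so they contribute nothing to the Hessian of the Lagrangian, which therefore equals the constant, objective-only block matrix $\begin{pmatrix} 0 & -I_d \\ -I_d & 0 \end{pmatrix}$. At a vertex $\hat x$ of $P$ the normals of the active facet inequalities span $\R^d$, and likewise at $\hat z$ for $\pol{Q}$; hence the tangent space of the active constraints at $(\hat x,\hat z)$ --- and a fortiori the critical cone --- is $\{0\}$. Consequently the second-order sufficiency condition holds vacuously, regardless of the (indefinite) Hessian.

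For the remaining two conditions I would argue through the normal-cone geometry of the minimizing vertices. Stationarity expresses the objective direction $\hat z$ as a nonnegative combination $\hat z = \sum_i \lambda_i A_i$ of the active facet normals of $P$ at $\hat x$ (and symmetrically in the $z$-block). At a \emph{simple} vertex exactly $d$ linearly independent normals are active, so LICQ holds and the multipliers are unique; strict complementarity (all $\lambda_i > 0$) then says precisely that $\hat z$ lies in the interior of the normal cone at $\hat x$, which is equivalent to the optimal face being the single vertex $\hat x$ --- i.e., to the finiteness hypothesis of Corollary~\ref{cor:finsol}. Thus at simple vertex pairs all three conditions hold and Nie's theorem yields the claim.

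The main obstacle is the possibility of \emph{degenerate} vertices, at which more than $d$ facets of (the irredundant) $P$ meet: there the active normals are linearly dependent, so LICQ fails even though the second-order and strict-complementarity phenomena above persist. This is the step that requires genuine work. I would attempt to resolve it either by observing that the Mangasarian--Fromovitz constraint qualification still holds at every vertex of a full-dimensional polytope (a direction pointing into the interior strictly relaxes all active inequalities) and checking whether Nie's argument survives under MFCQ, or, failing that, by a perturbation-and-limit argument: perturb $Q$ (or $P$) to make all relevant vertices simple while preserving both the containment status and the finiteness of the optimum, apply the simple-vertex case, and control the sos degree in the limit --- the degree control being the delicate point. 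Once the optimality conditions are secured, Nie's theorem gives $\mu^* - x^T z \in\qm(A,B)$, so $\mu(t)=\mu^*$ for some finite $t$; comparing this value with $1$ decides the containment by Proposition~\ref{prop:HinV}.
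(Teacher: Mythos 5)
Your reduction, the Archimedean observation, and the analysis at simple vertices are all sound, but the proof is incomplete exactly where you say it is: at degenerate optimal vertices, where more than $d$ facets of $P$ (or of $\pol{Q}$) are active, LICQ fails and Nie's theorem in the packaged form you invoke (LICQ $+$ strict complementarity $+$ second-order sufficiency at every global optimizer) simply does not apply. Neither of your proposed repairs closes this: Nie's sufficient condition uses LICQ to turn \emph{all} active constraints into a system of local coordinates in which Marshall's boundary Hessian condition (BHC) is checked, so it is not known to survive under MFCQ alone; and the perturbation route founders on the point you yourself flag, since finite convergence carries no uniform degree bound, so certificates for the perturbed problems need not pass to a certificate in the limit. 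Because Theorem~\ref{thm:convergence2} is claimed under the finiteness hypothesis of Corollary~\ref{cor:finsol} alone, which does not exclude degenerate vertices, this is a genuine gap rather than a technicality.

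The paper closes it by bypassing Nie's packaged conditions and verifying the BHC directly, and the key point is that Marshall's result (Proposition~\ref{prop:marshall}) does not require the selected constraints to cut out the feasible set locally, i.e., it needs no LICQ. Concretely, a KKT-plus-Carath\'eodory lemma (Lemma~\ref{lem:multiplier}) writes the gradient at an optimizer $(\bar x,\bar z)$ as a \emph{strictly positive} combination of a \emph{linearly independent} subset of active constraint normals, indexed by sets $I,J$ with $|I|,|J|\le d$; the finiteness hypothesis then forces $|I|=|J|=d$, since otherwise $\bar z$ would lie in the normal cone of a face of $P$ of dimension at least one containing $\bar x$, making $x^T\bar z$ constant on that face and producing infinitely many optimal solutions. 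The affine map $(x,z)\mapsto\bigl((a-Ax)_I,(\mathds{1}_l-B^Tz)_J\bigr)$ is then an invertible change of coordinates in which the linear part of $\mu^*-x^Tz$ has positive coefficients (the multipliers), and since all $2d$ local parameters are used ($r=2d$), the Hessian part of the BHC is vacuous; Proposition~\ref{prop:marshall} gives $\mu^*-x^Tz\in\qm(A,B)$. Any further constraints active at a degenerate vertex are simply ignored --- they never enter the certificate --- which is precisely the flexibility your LICQ-based route lacks; your simple-vertex analysis is the special case in which $I$ and $J$ happen to consist of all active constraints.
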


To prepare for the proof, we show that in the semidefinite hierarchy for the
{\sc Polytope containment} problem, the sos formulation is invariant under
redundant constraints, i.e., redundant inequalities in the
$\HH$-representation of $P$ or redundant points in the $\VV$-representation of
$Q$.
Note that for a general semialgebraic constraint set this is not always
true, even in the case of optimizing a linear function over it;
see~\cite[Section 5.2]{Henrion2008} for a well-known example (cf.\ also
\cite{gva-2011}).
Recall that every $\HH$-representation of a certain polytope contains the
facet defining halfspaces. Similarly, the vertices are part of each
$\VV$-representation.

\begin{lemma}[Redundant constraints] \label{lem:redundant}
Let $ P_A = \left\{ x\in\R^d\ |\ a-Ax \geq 0 \right\} $ and $Q_B = \conv(B)$
be nonempty polytopes with $ a\in\R^{k+1},\ A\in\R^{(k+1)\times d}$, and
$B\in\R^{d\times (l+1)}$.
\begin{enumerate}
  \item 
  If $(a-Ax)_{k+1}\geq 0$ is a redundant inequality in the
  $\HH$-representation of $P_A$, then it is also redundant in the sos
  representation~\eqref{eq:putinar}, i.e., the inclusion $P_A\subseteq Q_B$
  is certified by a certain step of the hierarchy if and only if 
  $P_{A\bs A_{k+1}}\subseteq Q_B$ is certified by the same step.
  \item 
  If $b^{(l+1)}$ is a redundant point in the $\VV$-representation of $Q_B$,
  then it is also redundant in the sos representation~\eqref{eq:putinar}, i.e., 
  $P_A \subseteq Q_B$ is certified by a certain step of the hierarchy if and
  only if $P_A\subseteq Q_{B\bs b^{(l+1)}}$ is certified by the same step.
\end{enumerate}
\end{lemma}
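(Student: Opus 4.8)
The plan is to prove both parts simultaneously by showing that the redundant generator can be removed from the \emph{entire} truncated quadratic module, i.e.\ $\qm_t(A,B)=\qm_t(A\bs A_{k+1},B)$ in case~(1) and $\qm_t(A,B)=\qm_t(A,B\bs b^{(l+1)})$ in case~(2), for every order $t$. Since certification of containment at step $t$ is governed entirely by whether the relevant polynomial lies in the truncated module at that step (indeed the optimal value $\mu(t)$ is determined by the module), equality of the two modules yields the claimed ``certified iff certified'' equivalences at once. One inclusion is immediate in both cases: dropping a generator from the list only shrinks the set of admissible representations, so the module of the smaller representation is contained in that of the larger one. The real content is the reverse inclusion, namely that any representation using the redundant generator can be rewritten without it and without raising the degree.

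For case~(1) the first step is to certify redundancy algebraically. Since $(a-Ax)_{k+1}\geq 0$ is valid on the nonempty polytope $P_{A\bs A_{k+1}}$, the affine form of Farkas' Lemma~\cite[Corollary 7.1h]{Schrijver1986} produces nonnegative scalars $\lambda_0,\lambda_1,\dots,\lambda_k\geq 0$ with the polynomial identity $g_{k+1}=\lambda_0+\sum_{i=1}^k\lambda_i g_i$, where $g_i=(a-Ax)_i$. Given any $f=\sigma_0+\sum_{i=1}^{k+1}\sigma_i g_i+\sum_{j=1}^{l+1}\tau_j h_j\in\qm_t(A,B)$ with all $\sigma_i,\tau_j$ sum of squares and $h_j=1-(b^{(j)})^Tz$, I substitute this identity into the term $\sigma_{k+1}g_{k+1}$ and redistribute: the contribution $\lambda_0\sigma_{k+1}$ is absorbed into $\sigma_0$ and each $\lambda_i\sigma_{k+1}$ into $\sigma_i$. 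Because a nonnegative scalar times a sum of squares is again a sum of squares, the new multipliers remain sos; and since $\deg(\sigma_{k+1}g_{k+1})\leq 2t$ with $\deg g_{k+1}=\deg g_i=1$, all degree bounds $\deg(\sigma_i g_i)\leq 2t$ are preserved. Hence $f\in\qm_t(A\bs A_{k+1},B)$.

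Case~(2) runs in parallel and is even cleaner. Redundancy of $b^{(l+1)}$ means $b^{(l+1)}=\sum_{j=1}^l\gamma_j b^{(j)}$ with $\gamma_j\geq 0$ and $\sum_{j=1}^l\gamma_j=1$, which gives the identity $h_{l+1}=\sum_{j=1}^l\gamma_j h_j$ directly, with no constant term required precisely because the $\gamma_j$ sum to one. The same substitute-and-refold argument, now absorbing each $\gamma_j\tau_{l+1}$ into $\tau_j$, eliminates $h_{l+1}$ from any representation at no cost in degree, giving $\qm_t(A,B)\subseteq\qm_t(A,B\bs b^{(l+1)})$.

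The main obstacle is this reverse inclusion, and within it the production of a nonnegative-coefficient representation of the redundant generator in terms of the retained ones; once the affine Farkas identity (respectively the convex-combination identity) is in hand, the remainder is the routine observation that nonnegative multiples of sums of squares are sums of squares and that multiplication by a constant does not raise the degree. I would also record that the argument proves slightly more than stated: the full truncated modules agree, so the optimal values $\mu(t)$ of the two sos programs coincide step by step, not merely their yes/no certification of containment.
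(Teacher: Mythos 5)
Your proposal is correct and follows essentially the same route as the paper: express the redundant generator as a nonnegative affine (resp.\ convex) combination of the retained ones via Farkas' Lemma, substitute into a given representation, and absorb the resulting terms into the existing sos multipliers without increasing degrees. The only differences are cosmetic --- you package the argument as an equality of truncated quadratic modules $\qm_t(A,B)=\qm_t(A\bs A_{k+1},B)$ (hence equality of the values $\mu(t)$) and write out part~(2) explicitly, whereas the paper rewrites one fixed representation of $\mu(t)-x^Tz$ and dismisses part~(2) as analogous.
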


\begin{proof}
We only prove statement (1), the proof of part (2) is analog. Consider an
sos representation of $\mu(t)-x^T z$ for some $t\geq 2$, 
\[
  \mu(t) -x^T z = \sigma_0 + \sum_{i=1}^{k+1} \sigma_i \left(a-Ax\right)_i
  + \sum_{i=1}^l \sigma_{k+1+i} \left(1-B^T z\right)_i
  \in \qm(A,B) \, ,
\]
where $\sigma_0,\ldots,\sigma_{k+l+1}\in\Sigma[x,z]$ are sos polynomials with
$\deg\sigma_0 \le 2t$ and $\deg\sigma_i \le 2t-2$ for $i\in\{1,\ldots,k+l+1\}$.
Since $(a-Ax)_{k+1}$ is redundant in the description of $P_{A}$, we can write
it as a conic combination of the remaining linear polynomials,
\[
  (a-Ax)_{k+1} = \lambda_0 + \lambda^T(a-Ax),
  \ \lambda\in\R^k_+,\ \lambda_0\in\R_+.
\]
Replacing $\sigma_{k+1} (a-Ax)_{k+1}$ in the sos representation yields
\[
  \mu(t)-x^T z = \sigma'_0 + \sum_{i=1}^k \sigma'_i \left(a-Ax\right)_i
  + \sum_{i=1}^l \sigma_{k+1+i} \left(1-B^T z\right)_i 
  \in\qm(A\bs A_{k+1},B) ,
\]
where $\sigma'_i=\lambda_i\sigma_{k+1}+\sigma_i\in\Sigma[x,z]$ with degree
$\deg(\sigma'_i)=\max\{\deg(\lambda_i\sigma_{k+1}),\deg(\sigma_i)\}\leq 2t-2$
for $i \in \{0,\ldots, k\}$.
\end{proof}

To prove Theorem~\ref{thm:convergence2}, we introduce a sufficient convergence
condition by Marshall (see \cite{marshall2008,marshall2009}) which is based on
a boundary Hessian condition.

Given $g_1,\ldots,g_k \in\R[x]$ and a boundary point $\bar{x}$ of
$S = \{x \in\R^d\ |\ g_i(x)\ge 0,\ i=1,\ldots,k\}$.
We assume that (say, by an application of the inverse function theorem) there
exists a local parameterization for $\bar{x}$ in the following sense:
There exist open sets $U,V \subseteq\R^{d}$ such that $\bar{x} \in U$,  
$\phi : U \to V,\ x \mapsto t := (t_1,\ldots,t_d) $ is bijective, the inverse
$\phi^{-1} : V \to U$ is a continuously differentiable function on $V$,
and for some $r \in\{1,\ldots,d\}$ let $t_1 = g_1, \ldots, t_r = g_r$ on $U$.

\begin{condition}[Boundary Hessian condition, BHC] \label{con:hessian}
Given a polynomial $f\in\R[x]$, denote by $f_1$ and $f_2$ the linear and
quadratic part of $f$ in the localizing parameters $t_1,\ldots,t_d$,
respectively.
Let $R = \{ (t_1,\ldots,t_d)\in\R^d \, | \, t_1 \ge 0,\ldots,t_r \ge 0 \}$.
If the linear form $ f_1 = c_1 t_1 + \dots + c_r t_r $ has only negative
coefficients and the quadratic form $ f_2(0,\ldots,0,t_{r+1},\ldots,t_d) $ is
negative definite, then the restriction $f_{|R}$ has a local maximum at
$\bar{x}$.
\end{condition}

Using this condition, the following generalization of Putinar's Theorem can be
stated.

\begin{prop}[{\cite[Theorem 9.5.3]{marshall2008}, see also
\cite[Theorem 3.1.7]{scheiderer-guide}}]
\label{prop:marshall}
Let $ f,g_1,\ldots,g_k \in\R[x]$, and suppose that the quadratic module
$\qm(G)$ generated by $G=\{g_1,\ldots,g_k\}$ is Archimedean. Further assume
that for each global maximizer $\bar{x}$ of $f$ over 
$S = \{x \in\R^d\ |\ g(x)\ge 0\ \forall g\in G\}$ there exists an index
set $I\subseteq\{1,\ldots,d\}$ such that (after renaming the variables w.r.t.\
the indices in $I$ and w.r.t.\ the indices not in $I$) $f$ satisfies BHC at
$\bar{x}$. Denote by $f_{\max}$ the global maximum of $f$ on $S$.
In this situation, $f_{\max}-f\in\qm(G)$.
\end{prop}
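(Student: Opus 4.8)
The plan is to prove the membership $f_{\max}-f\in\qm(G)$ by a local-to-global argument, reducing a single global algebraic identity to local computations governed by the boundary Hessian condition at each maximizer. First I would normalize by setting $h := f_{\max}-f$, so that $h\ge 0$ on $S$ and the zero set $Z := \{x\in S : h(x)=0\}$ is precisely the set of global maximizers of $f$. Since $\qm(G)$ is Archimedean, $S$ is compact, so $f_{\max}$ is attained and $Z\neq\emptyset$ in the nontrivial case. If $Z=\emptyset$ (equivalently $h>0$ on $S$), then $h\in\qm(G)$ is immediate from Putinar's Positivstellensatz (Proposition~\ref{prop:putinar}); the entire difficulty is concentrated at the maximizers $\bar{x}\in Z$, where $h$ only touches zero and strict positivity fails.

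The engine of the proof is a local-global principle for Archimedean quadratic modules: $h\in\qm(G)$ holds as soon as, for every $\bar{x}\in S$, the germ of $h$ lies in the quadratic module generated by $g_1,\ldots,g_k$ inside the ring of convergent (or formal) power series at $\bar{x}$. At a point where $h(\bar{x})>0$ this local condition is automatic, since a positive germ is a square of a unit; hence it remains to verify the local condition only at the maximizers $\bar{x}\in Z$. I would take this local-global reduction as the main structural input: it is exactly the step that upgrades local sum-of-squares data into a genuine polynomial identity in the finitely generated module, and it is where the Archimedean hypothesis is indispensable.

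For the local analysis at a fixed $\bar{x}\in Z$, I would use the parametrization $\phi$ supplied by the boundary Hessian condition (Condition~\ref{con:hessian}): in the coordinates $t=\phi(x)$ the set $S$ agrees near $\bar{x}$ with the region $\{t_1\ge 0,\ldots,t_r\ge 0\}$, the active constraints $g_i$ factor as $t_i$ times a positive unit, and the inactive constraints are themselves positive units. Taylor-expanding $h$ in the parameters and invoking BHC, the linear part contributes $\sum_{i=1}^r c_i t_i$ with all $c_i>0$, while the relevant quadratic part in the free directions $t_{r+1},\ldots,t_d$ is positive definite. A Morse-type completion of squares in the complete local ring then rewrites $h=\sum_{i=1}^r u_i\,t_i+\sum_j w_j^2$, where the $u_i$ are positive units (hence squares of units) and the $w_j$ are germs; the positive-definite Hessian is what lets the higher-order terms in the free directions be absorbed into the sum of squares. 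Re-expressing the $t_i$ through the unit factors relating them to the generators $g_i$ exhibits the germ of $h$ in the local quadratic module generated by $G$, which is exactly the hypothesis required by the local-global principle.

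Combining the two ingredients finishes the argument: local membership at every $\bar{x}\in Z$ (and trivially elsewhere) yields $h=f_{\max}-f\in\qm(G)$ globally. The main obstacle is twofold. First, the local-global principle itself is the deep analytic-algebraic heart of the proof, and making it precise requires care about which local ring one works in and how the Archimedean property guarantees that globally bounded denominators suffice. Second, the local step must produce a representation in terms of the actual generators $g_i$ rather than merely in the abstract parameters $t_i$; since $\phi$ is only analytic and not polynomial, one has to track the unit factors carefully and ensure the completion of squares is performed so that the resulting germs genuinely lie in the module generated by the $g_i$.
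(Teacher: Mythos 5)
The paper offers no proof of Proposition~\ref{prop:marshall} at all: it is imported verbatim, with citation, from Marshall's book \cite[Theorem~9.5.3]{marshall2008}, so the only meaningful comparison is with the proof in that source. Your sketch is essentially that proof and is correct in its architecture: Scheiderer's local-global principle for Archimedean quadratic modules (the genuinely deep step, which you rightly isolate and which you should attribute rather than reprove) reduces $f_{\max}-f\in\qm(G)$ to membership of its germ in the quadratic module inside the completed local ring $\R[[t_1,\ldots,t_d]]$ at each global maximizer, and there the BHC data --- linear part $\sum_{i\le r}c_i t_i$ of $f_{\max}-f$ with $c_i>0$ and positive definite quadratic part in the free directions $t_{r+1},\ldots,t_d$ (note your signs are the ones that actually make the local step work, i.e.\ Marshall's convention for $f_{\max}-f$, whereas Condition~\ref{con:hessian} as printed states them for $f$ itself) --- permits the Morse-type completion of squares absorbing the higher-order terms, with the $t_i$ for $i\le r$ being, up to units, the active generators $g_i$, exactly as you describe.
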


Our goal is to show that under the assumptions of
Theorem~\ref{thm:convergence2} the boundary Hessian condition holds.
We will use the following version of the Karush-Kuhn-Tucker conditions adapted
to the bilinear situation.

\begin{lemma}[{\cite[Section~5.1]{bss-2006}}] \label{lem:multiplier}
Let $f(x,z) \in\R[x,z]$ be a continuously differentiable function and let 
$ \P := P_A \times P_B = \{ (x,z)\in\R^{2d}\ |\ a-Ax \geq 0,\ b-Bz \geq 0\} $
be the product of two nonempty polytopes. If $f$ attains a local maximum at 
$(\bar{x},\bar{z})$ on $\P$, then there exists $(\alpha,\beta)$ such that 
\begin{align}
\begin{split} \label{eq:multi2} 
  \nabla f(\bar{x},\bar{z}) &= 
  \begin{bmatrix} A^T & 0 \\ 0& B^T \end{bmatrix} 
  \begin{pmatrix} \alpha \\ \beta \end{pmatrix} \\
  0 &=\alpha_i (a - A\bar{x})_i = \beta_j (b - B \bar{z})_j \, , \quad 
  i=1,\ldots,k, \; j=1,\ldots, l \\
  &\alpha \geq 0,\ \beta \geq 0 .
\end{split}
\end{align}
\end{lemma}

In the lemma, only multipliers corresponding to active constraints can be
positive, since otherwise one of the equations~\eqref{eq:multi2} is violated.

We are now able to prove Theorem~\ref{thm:convergence2}. In a more general
setting, Nie used the Karush-Kuhn-Tucker optimality conditions to certify the
BHC; see~\cite{nie2012}. Because of the special structure of
problem~\eqref{eq:hinv}, we do not need the whole machinery used by Nie. In
particular, the local parameterization needed for the BHC (see the paragraph
before Condition~\ref{con:hessian}) comes from an affine variable
transformation. As a consequence, for {\sc Polytope containment}, our direct
approach allows to prove a stronger result than we would obtain just by
applying Nie's Theorem. Specifically, we obtain a geometric characterization
of the degenerate situations as given in Theorem~\ref{thm:convergence2}.

\begin{proof}[Proof (of Theorem~\ref{thm:convergence2})]
Let $ (\bar{x},\bar{z}) \in P\times Q^{\circ}$ be an arbitrary but fixed
optimal solution. By Lemma~\ref{lem:multiplier} there exists
$(\alpha,\beta)\in\R^{k+l}$ such that 
\begin{align}
\begin{split} \label{eq:proof1}
  (\bar{z},\bar{x}) &= ( A^T \alpha , B \beta ) \\
    0 &=\alpha_i (a - A\bar{x})_i = \beta_j (\mathds{1} - B^T \bar{z})_j \, , \quad 
  i=1,\ldots,k, \; j=1,\ldots, l\\
  & \alpha \geq 0,\ \beta \geq 0 .
\end{split}
\end{align}
As mentioned before, only multipliers corresponding to active constraints can
be positive.
Denote by $\mathbb{I}$ the collection of index sets of active constraints in
$\bar{x}$ with positive multipliers in~\eqref{eq:proof1}.
Assume $|I|<d$ holds for any such index set $I\in\mathbb{I}$.
Then $\bar{z}$ is a positive combination of at most $d-1$ active constraints in
$\bar{x}$. That is, $\bar{z}$ does not lie in the interior of the outer
normal cone of the vertex $\bar{x}$.
Equivalently, $\bar{z}$ lies in the outer normal cone of an at least
one-dimensional face $F$ of $P$ containing $\bar{x}$.
Then $x^T\bar{z}=\bar{x}^T \bar{z}$ for all $x\in F$, in contradiction to the
assumption of the theorem and Corollary~\ref{cor:finsol}.
Thus there must be an index set $\bar{I}\in\mathbb{I}$ of cardinality of at
least $d$.
By a symmetric argument, there exists an index set $\bar{J}$ of 
active
constraints in $\bar{z}$ with positive multipliers in~\eqref{eq:proof1} that
has cardinality $|\bar{J}|\ge d$.

If existent, we pick such index sets $I$ and $J$ with $|I|=|J|=d$. 
Otherwise, we proceed as follows, where $A_i$ denotes the $i$-th row of $A$.
As $\bar{x}$ is a vertex of $P$, the cone
$\pos\{A^T_i \, | \, i \in I\}$ is full-dimensional and 
contains $\bar{z}$ in its interior.
There exist linearly independent
$v^{(1)},\ldots,v^{(d)} \in \pos\{A^T_i \, | \, i \in I\}$
generating a simplicial subcone with
$\bar{z} \in \inter \pos \{v^{(1)},\ldots,v^{(d)} \} 
  = \big\{ \sum_{i=1}^d \mu_i v^{(i)} \, | \, \mu_i > 0 \big\} \, .
$

Indeed, introducing the vectors $v^{(i)}$ corresponds to adding
redundant inequalities to the $\HH$-polytope $P$ which are active in $\bar{x}$.
By Lemma~\ref{lem:redundant},
if we show the statement for this redundant
representation of $P$, it is also applicable to the original set.
Thus, after possibly introducing these redundancies, 
there exists an index set $I$, $|I| = d$, of linearly
independent active constraints with positive coefficients. And analogously
for the subset $J$ and the representation of $\pol{Q}$.

We apply the affine variable transformation $\phi:\ \R^{2d}\to\R^{2d}$ defined
by
\[
  \phi(x,z) = \begin{bmatrix} (a-Ax)_I \\ (\mathds{1}_l-B^T z)_J \end{bmatrix}
\]
and denote the new variables by
$ (s,t) := (s_1,\ldots,s_d,t_1,\ldots,t_d) =
  (\phi_{1}(x,z),\ldots,\phi_{2d}(x,z)) $.
Clearly, $\phi$ is a local parameterization at
$(\bar{x},\bar{z})$ in the sense of Condition~\ref{con:hessian}. The inverse
of $\phi$ is given by 
\[
  (s,t)\mapsto \begin{bmatrix}
                 A_I^{-1}(a_I -s) \\
		 (B^T_J)^{-1}(\mathds{1}_J -t)
               \end{bmatrix} .
\]
Setting $M:=B_J^{-1}A_I^{-1}$, the objective $x^T z$ has the form
\[
  f(s,t) := \left(A_I^{-1}(a_I -s)\right)^T 
  \left((B^T_J)^{-1}(\mathds{1}_J -t)\right)
  = s^T M^T t -s^T M^T \mathds{1}_J - a_I^T M^T t + a_I^T M^T \mathds{1}_J 
\]
in the local parameterization space. Denote by $f_1$ the homogeneous part of
degree 1. Then 
$(\bar{x},\bar{z}) =\phi^{-1}(0) =(A_I^{-1}a_I , (B_J^T)^{-1}\mathds{1}_J )$
implies 
\[
  \nabla_{s,t} f_1 (0) 
  = (-\mathds{1}_J^T B_J^{-1}A_I^{-1}, -a_I^T (A_I^T)^{-1} (B_J^T)^{-1} ) 
  = (-\bar{z}^T A_I^{-1}, -\bar{x}^T (B_J^T)^{-1}) 
  = (-\alpha_I^T, -\beta_J^T)\, ,
\]
where the last equation follows from the first identity in~\eqref{eq:proof1}.
Thus the first part of Condition~\ref{con:hessian} is satisfied.
Since $ |I|+|J| =r= 2d $ (where $r$ is from Condition~\ref{con:hessian}), the
second assumption in Condition~\ref{con:hessian} is obsolete. Therefore, by
Proposition~\ref{prop:marshall}, $ \mu^* - x^T z \in\qm(A,B)$.
\end{proof}

Geometrically, the proof uses that in a global maximizer of the bilinear
problem (which by Corollary~\ref{cor:finsol} is a vertex of the polytope
$P\times Q^{\circ}$) traversing along one of the outgoing edges strictly
decreases the objective function, making the second assumption in
Condition~\ref{con:hessian} obsolete.

\subsection{A sufficient criterion and examples} 
\label{sec:examples}

To illustrate the behavior of the approach, we discuss some properties and
a sufficient criterion. It is helpful to start from the following two
structured examples.

\begin{ex} \label{ex:cubecross}
Let $P$ be the cube 
$P = \{ -1 \leq x_i \leq 1,\ i=1,\ldots,d \} \subseteq \R^d$,
and let $ \pol{Q} = \{ -1 \leq e z_i \leq 1,\ i=1,\ldots,d \} \subseteq \R^d$, 
i.e., $Q$ is
a $d$-dimensional cross polytope scaled by a positive integer $e$. Clearly,
$P\subseteq Q$ if and only if $e \geq d$. 

Consider the sos representation of order $t=2$
\begin{align*}
  \frac{d}{e} - x^T z &= 
  \frac{1}{8e} \sum_{i=1}^d \left[ (1-x_i) [(1+x_i)^2 + (1+ez_i)^2]
  + (1+x_i) [ (1-x_i)^2 + (1-ez_i)^2 ] \right] \\
  \ &+ \frac{1}{8e} \sum_{i=1}^d \left[ (1-ez_i) [( 1+x_i)^2 + (1+ez_i)^2] 
  + (1+ez_i) [(1-x_i)^2 + (1-ez_i)^2] \right] .
\end{align*}
If $ e \geq d $, then $ 1-x^T z \geq \frac{d}{e} - x^T z \geq 0 $, certifying
the containment $ P\subseteq Q $ (with strongness if $e > d$). If $ e < d $,
then $ 1-x^T z < \frac{d}{e} - x^T z $. This is not a certificate for
non-containment, since there might be a different sos representation. However,
in this case this is not possible since $e\geq d$ is a necessary condition for
containment.
Note that the necessary order is low and the number of terms is linear in the
dimension.
\end{ex}

\begin{ex} \label{ex:cube}
Let $P$ be the $d$-dimensional cube in $\HH$-representation as in
Example~\ref{ex:cubecross} and $Q= \conv(\{-1,1\}^d)$ be the $d$-dimensional
cube in $\VV$-representation.
Denote by $rP:=\{ x\in\R^{d}\ |\ -r\le x_i \le r,\ i=1,\ldots,d\}$ the
$r$-scaled cube with edge length $2r$. Clearly, $rP\subseteq Q$ if and only if
$0\leq r\leq 1$. This containment problem is combinatorially hard since the
number of inequalities is equal to $2d + 2^d $ and thus exponential in the
dimension.

We are interested in the maximal $r$ such that the containment $rP\subseteq Q$
is certified by a certain step $t$. We also ask for the minimal $t$ such that
$P= 1P \subseteq Q$ is certified. Note that for $r=1$ a priori the existence
of such a $t$ is not clear since neither Theorem~\ref{thm:archimedean} nor
Theorem~\ref{thm:convergence2} applies.

\begin{table}
\begin{tabular}{c|cccc}
  \toprule
  $d\, \bs\, t $ & 2 & 3 & 4 & 5 \\
 \midrule
 2 & 0.7071 & 0.9937 & 0.9994 & 0.9999 \\
 3 & 0.5774 & 0.8819 & 0.9949 & 0.9994 \\
 4 & 0.5000 & 0.7906 & 0.9461 &  \\ 
 5 & 0.4472 & 0.7211 &  &  \\
  \bottomrule
\end{tabular}
\\[+0.5ex]
\caption{Computational test of containment of an $r$-scaled $\HH$-cube in
a $\VV$-cube as described in Example~\ref{ex:cube}. The entries denote the
maximal $r$ (rounded to four decimal places) such that containment in
dimension $d$ is certified by the order $t$.}
\label{tab:numeric-cube}
\end{table}

For $r= \sqrt{d}/d$, $rP\subseteq Q$ is certified by the sos representation
\begin{align*}
  1 - x^T z &= 
  \frac{1}{2} \sum_{i=1}^d \left(x_i-z_i\right)^2 
  + \frac{1}{2^{d+1}}\sum_{v\in\{-1,1\}^d}\left(1+v^T z\right)^2 \left(1-v^T
z\right) \\
  &+ \frac{1}{4r}\sum_{i=1}^d \left( \left(r+x_i\right)^2
  \left(r-x_i\right) +\left(r-x_i\right)^2 \left(r+x_i\right) \right) .
\end{align*}
We are not aware of a more compact sos representation. Numerically, for
$t=2$ and $d\leq 5$, we get $r(d) = \sqrt{d}/d$; see
Table~\ref{tab:numeric-cube}.

Note that the variable transformations $x_i \mapsto \frac{1}{\lambda_i} x_i'$
and $z_i \mapsto \lambda_i z_i'$ give a certificate for the containment of the
box $[-\lambda_1,\lambda_1] \times \cdots \times [-\lambda_d,\lambda_d]$ in 
the box $\frac{d}{\sqrt{d}}Q$, where
$ Q= \conv(\{-\lambda_1,\lambda_1\} \times \cdots \times
  \{-\lambda_d,\lambda_d\})$.
\end{ex}

The consideration of the box leads to the following sufficient
criterion for the existence of a certificate in the 
initial relaxation step. The criterion implies that the containment
of any polytope within any other ``sufficiently large'' polytope 
is certified already in the initial relaxation step.

\begin{thm}\label{th:smallinbig}
Let $P$ be an $\mathcal{H}$-polytope and $Q$ be a $\mathcal{V}$-polytope 
in $\R^d$, and assume that there exists a box 
$S = \prod_{i=1}^d [-\lambda_i, \lambda_i]$ with $\lambda_i > 0$ and
$P \subseteq S \subseteq \frac{\sqrt{d}}{d}Q$. 
Then the inclusion 
$P \subseteq Q$ is certified in the initial relaxation step.
\end{thm}

To prepare for the proof, we first provide a transitivity property.

\begin{lemma}[Transitivity] \label{lem:transitivity}
~
\begin{enumerate}
  \item 
  Given a $\VV$-polytope $Q$ and $\HH$-polytopes $P$ and $P'$ such that 
  $P'\subseteq P\subseteq Q$. If for a certain $t\geq 2$ the $t$-th step of
  hierarchy~\eqref{eq:putinar} certifies containment of $P$ in $Q$,
  then it also certifies containment of $P'$ in $Q$.
  \item
  Given $\VV$-polytopes $Q$ and $Q'$, and an $\HH$-polytope $P$ such that 
  $P\subseteq Q\subseteq Q'$. If for a certain $t\geq 2$ the $t$-th step of
  hierarchy~\eqref{eq:putinar} certifies containment of $P$ in $Q$,
  then it also certifies containment of $P$ in $Q'$.
\end{enumerate}
\end{lemma}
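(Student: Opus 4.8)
The plan is to handle both parts by the single mechanism already employed in the proof of Lemma~\ref{lem:redundant}: rewrite each linear constraint of the \emph{inner} set as a conic (affine) combination of the constraints of the \emph{outer} description, substitute this into the given sos certificate, and absorb the resulting products into sos multipliers of the same degree. The only external tool needed is the affine form of Farkas' Lemma~\cite[Corollary 7.1h]{Schrijver1986}, used exactly as in that earlier proof.

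For part (1), write $P = \{x : a - Ax \geq 0\}$ and $P' = \{x : a' - A'x \geq 0\}$, and fix an sos certificate of containment of $P$ in $Q$ at step $t$,
\[
  1 - x^T z = \sigma_0 + \sum_{i=1}^{k} \sigma_i (a - Ax)_i + \sum_{i=1}^{l} \tau_i (\mathds{1}_l - B^T z)_i \in \qm_t(A,B),
\]
where $\sigma_i,\tau_i \in \Sigma[x,z]$ satisfy $\deg\sigma_0 \le 2t$ and $\deg\sigma_i,\deg\tau_i \le 2t-2$. Since $P' \subseteq P$, each $(a-Ax)_i$ is nonnegative on $P'$, so by the affine Farkas Lemma there are $\lambda_{i,0}\ge 0$, $\lambda_{i,j}\ge 0$ with $(a-Ax)_i = \lambda_{i,0} + \sum_j \lambda_{i,j}(a'-A'x)_j$. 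Substituting $\sigma_i(a-Ax)_i = \lambda_{i,0}\sigma_i + \sum_j \lambda_{i,j}\sigma_i(a'-A'x)_j$ and collecting terms gives a representation in $\qm_t(A',B)$: the constants $\lambda_{i,0}\sigma_i$ are sos and join $\sigma_0$, while $\sigma'_j := \sum_i \lambda_{i,j}\sigma_i$ is sos of degree $\le 2t-2$. The $Q$-block is untouched, so the degree budget is respected and the same step $t$ certifies $P'\subseteq Q$.

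For part (2), write $Q=\conv(B)$ and $Q'=\conv(B')$ with $B'=[b'^{(1)},\dots,b'^{(m)}]$; since $0\in Q\subseteq Q'$, both polars are well defined and the inclusion reverses to $\pol{Q'}\subseteq\pol{Q}$. I would then run the mirror image of the previous argument: each polar constraint $(\mathds{1}_l - B^T z)_i$ is nonnegative on $\pol{Q'}$, so the affine Farkas Lemma writes it as $\mu_{i,0} + \sum_j \mu_{i,j}(\mathds{1}_m - (B')^T z)_j$ with nonnegative coefficients. Replacing $\tau_i(\mathds{1}_l - B^T z)_i$ accordingly in the hypothesised certificate for $P\subseteq Q$ and collecting terms lands the identity in $\qm_t(A,B')$, now with the $P$-block untouched, certifying $P\subseteq Q'$ at the same step $t$.

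There is no genuine obstacle here beyond bookkeeping; the one point that must be verified is that substitution does not inflate degrees. This holds because each $\sigma_i$ (resp. $\tau_i$) is already sos of degree $\le 2t-2$: multiplying it by a nonnegative scalar keeps it sos of the same degree, and multiplying by a single linear form keeps the product within the step-$t$ budget. The passage from $Q\subseteq Q'$ to $\pol{Q'}\subseteq\pol{Q}$ (valid since $0$ lies in both polytopes) is what lets part (2) reuse the part (1) argument verbatim.
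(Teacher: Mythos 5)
Your proof is correct and certifies containment at the same step $t$, but it is organized differently from the paper's. The paper proves part (1) in two moves: first, adjoining the inequalities of $P'$ to those of $P$ one at a time can only enlarge the truncated quadratic module (give each new constraint the zero multiplier), so the given certificate survives on the combined description of $P\cap P'=P'$; second, the original inequalities of $P$ are now redundant in that description and are stripped off by Lemma~\ref{lem:redundant}, which leaves the step $t$ unchanged. Part (2) is declared analogous. Your one-shot Farkas substitution in effect \emph{inlines} the proof of Lemma~\ref{lem:redundant} rather than citing it: both arguments rest on writing an affine form nonnegative on the inner polyhedron as a nonnegative combination of its defining constraints plus a nonnegative constant, and on the observation that rescaling sos multipliers by nonnegative scalars costs nothing in degree. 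What the paper's route buys is brevity (the substitution work is already encapsulated in Lemma~\ref{lem:redundant}) and a clean monotonicity principle --- certificates persist under refining the description of the feasible set; what your route buys is a self-contained argument with the degree bookkeeping fully explicit, handling all constraints simultaneously instead of by iteration, and an explicit treatment of part (2) via the polarity reversal $Q\subseteq Q'\Rightarrow\pol{Q'}\subseteq\pol{Q}$, which the paper leaves to the reader. One hypothesis you use tacitly: the affine Farkas Lemma requires the inner set to be nonempty --- for $P'$ this is the paper's standing assumption on polytopes, and for part (2) it is automatic since $0\in\pol{Q'}$.
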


\begin{proof}
Assume first that $P' = P \cap \{x \in \R^d \, | \, f(x) \ge 0\}$
for an affine function $f:\R^d \to \R$.
Given an sos representation of $\mu(t)-x^T z$ w.r.t. $P$, by setting the
additional sos polynomial $\sigma_{k+1}$ to the zero-polynomial, i.e.
$\sigma_{k+1}\equiv 0$, this yields an sos representation w.r.t. $P'$.

In the general case, starting with $P$, incorporate the defining inequalities
of $P'$ into the representation of $P$ step-by-step. 
In every step the lower bound of the optimal value in~\eqref{eq:putinar} can
not increase. At the end of this process the defining inequalities of $P$ are
all redundant (since $P'\subseteq P$) and thus can be dropped, by
Lemma~\ref{lem:redundant}.
This proves part (1) of the statement. The proof of (2) is analog.
\end{proof}

\begin{proof}[Proof of Theorem~\ref{th:smallinbig}]
Consider $S$ as an $\mathcal{H}$-polytope and let $S'$ be the
box $S' = \frac{d}{\sqrt{d}} S =  \frac{d}{\sqrt{d}}[-\lambda, \lambda]^n$ 
in $\mathcal{V}$-representation. By example~\ref{ex:cube}, the inclusion
$S \subseteq S'$ is certified in the initial relaxation step.
Since $P \subseteq S$ and $S' \subseteq Q$ the transitivity statement
in Lemma~\ref{lem:transitivity} implies that the inclusion
$P \subseteq Q$ is certified
in the initial relaxation step.
\end{proof}

From an optimization viewpoint, such as considering smallest enclosing balls
of a polytope with regard to a polyhedral norm, it is natural to consider
scaled containment problems (cf.\ \cite{gritzmann-klee-containment-survey}).
Theorem~\ref{th:smallinbig} implies the following version of a 
scaled containment.

\begin{cor}[Scaled containment]
Let $P$ be an $\mathcal{H}$-polytope and $Q$ be a $\mathcal{V}$-polytope in
$\R^d$, both containing 0 in the interior. Then there exists a $\lambda > 0$
such that the containment $\lambda P \subseteq Q$ is certified in the initial
relaxation step.
\end{cor}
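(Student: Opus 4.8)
The plan is to deduce the corollary directly from Theorem~\ref{th:smallinbig} by producing, for a suitably small scaling factor, a coordinate box wedged between the scaled copy of $P$ and $\frac{\sqrt{d}}{d}Q$. The only care needed is notational: the scalar $\lambda$ in the corollary is a scaling factor, whereas the half-widths $\lambda_i$ in Theorem~\ref{th:smallinbig} describe the box, so I keep these distinct.

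First I would observe that the hypothesis $0 \in \inter Q$ is inherited by the scaled polytope, since multiplication by the positive constant $\frac{\sqrt{d}}{d}$ is a homeomorphism fixing the origin; hence $0 \in \inter\bigl(\tfrac{\sqrt{d}}{d}Q\bigr)$. In particular $\frac{\sqrt{d}}{d}Q$ is full-dimensional and contains a cube centered at the origin, so there exists $\varepsilon > 0$ with $S := \bigtimes_{i=1}^d [-\varepsilon, \varepsilon] \subseteq \frac{\sqrt{d}}{d}Q$.

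Next, since $P$ is a polytope it is bounded, so there is a constant $M > 0$ (for instance $M = \max\{\|v\|_\infty \mid v \in V(P)\}$) with $P \subseteq \bigtimes_{i=1}^d[-M, M]$. Setting $\lambda := \varepsilon/M > 0$ and scaling, I obtain $\lambda P \subseteq \bigtimes_{i=1}^d[-\lambda M, \lambda M] = \bigtimes_{i=1}^d[-\varepsilon, \varepsilon] = S \subseteq \frac{\sqrt{d}}{d}Q$. As $\lambda P$ is again an $\mathcal{H}$-polytope and $Q$ is a $\mathcal{V}$-polytope, Theorem~\ref{th:smallinbig} applies verbatim with $\lambda P$ in the role of $P$ and $S$ the sandwiching box, and yields that $\lambda P \subseteq Q$ is certified in the initial relaxation step.

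As for difficulty, there is essentially no obstacle beyond correctly invoking the interior hypothesis. The one point to double-check is that $\frac{\sqrt{d}}{d}Q$ remains full-dimensional with the origin in its interior, which is precisely what guarantees the existence of the box $S$ with strictly positive half-widths required by Theorem~\ref{th:smallinbig}; note that the assumption $0 \in \inter P$ is not actually used beyond ensuring that $P$ is a genuine bounded polytope.
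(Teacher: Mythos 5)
Your proof is correct and is essentially the paper's intended argument: the paper states the corollary as an immediate consequence of Theorem~\ref{th:smallinbig} without spelling out details, and your argument (shrink $\lambda P$ into a small coordinate box $[-\varepsilon,\varepsilon]^d$ contained in $\frac{\sqrt{d}}{d}Q$, which exists since $0\in\inter Q$, then apply the theorem) is exactly the deduction the authors had in mind. Your side remark that $0\in\inter P$ is not actually needed beyond $P$ being a nonempty bounded polytope is also accurate.
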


We conclude with the numerical behavior of relaxation~\eqref{eq:putinar} for
two non-symmetric examples.

\begin{ex} \label{ex:nonsym}
Consider the $\HH$-polytope $P=\{x\in\R^2\ |\ \mathds{1}_4 -Ax \geq 0 \}$ and
the $\VV$-polytopes $Q_1 =\conv B_1$ and $ Q_2 =\conv B_2 $ defined by
\[
  A = \begin{bmatrix} -1 & -1 \\ 0 & -1 \\ 1 & 0 \\ -1 & 1 \end{bmatrix}
  \, ,\quad 
  B_1 =\begin{bmatrix} -1 & 0 & 2 & 2 & -1 \\ 1 & 3 & 1 & -1 & -1\end{bmatrix}
  \, ,\quad 
  B_2 = \begin{bmatrix} -1 & -2 & 1 & 2 & 1 \\ 2 & 0 & -2 & 1 & 2\end{bmatrix}
  \, .
\]
$P$ is contained in both $Q_1$ and $Q_2$ but not strongly contained. 
$Q_1$ and $P$ share infinitely many boundary points, in fact, the boundary of
$Q_1$ contains a facet of $P$. 
$Q_2$ and $P$ intersect in a single vertex. See Figure~\ref{fig:hv-ex}.
Thus in both examples we are not in the situation of
Theorem~\ref{thm:convergence2}. 

\begin{figure}
\centering
\begin{subfigure}{.35\textwidth}
  \centering
  \includegraphics[width=\linewidth]{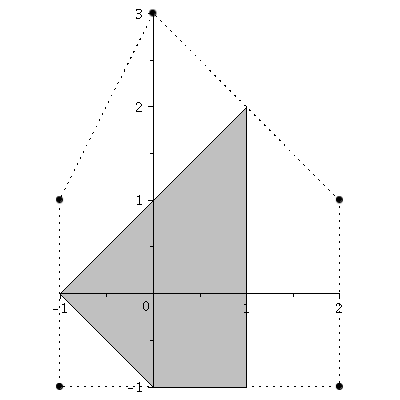}
  \caption{$P$ (grey area) in $Q_1$ (dotted).}
\end{subfigure}\hspace*{.1\textwidth}
\begin{subfigure}{.35\textwidth}
  \centering
  \includegraphics[width=\linewidth]{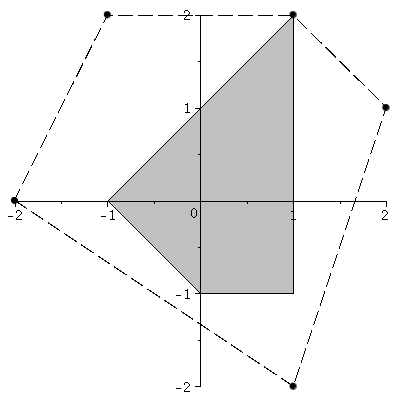}
  \caption{$P$ (grey area) in $Q_2$ (dashed).}
\end{subfigure}
\caption{Two non-symmetric examples as defined in Example~\ref{ex:nonsym}.}
\label{fig:hv-ex}
\end{figure}

The first problem $P\subseteq Q_1$ is not certified in the initial relaxation
step $t=2$. If we scale $P$, the maximum scaling factor for which containment
is certified is $r=0.9271$.
For the second problem $P\subseteq Q_2$, numerically the initial relaxation
does only for a scaling up to $r=0.9996$. 
\end{ex}

\section{Open questions}\label{se:openquestions}

In this paper, we studied algebraic certificates for {\sc Polytope
containment} coming from a sum of squares approach. We close with a short
discussion of open questions. We believe that these questions will be very
relevant in improving the understanding of sum of squares methods for
low-degree geometric problems, such as the one studied here.

For the {\sc Polytope containment} problem, can the structure of the
certificates be better characterized? Such as, what are suitable degree bounds
with regard to {\sc Polytope containment} or, somewhat more general, with
regard to general bilinear programming problems? 

The finite convergence result~\ref{thm:convergence2} is an essential
prerequisite for potential combinatorial accesses to sum of squares
certificates of low-degree problems.
How are $\HH$-to-$\VV$ conversion algorithms (such as the
Fourier-Motzkin-elimination) related to the algebraic certificates of {\sc
Polytope containment}?
Since Theorem~\ref{th:smallinbig} provides a very efficient certification of
{\sc Polytope Containment} in large polytopes, the question also arises in how
far the sum of squares techniques can be effectively combined with existing
combinatorial techniques such as Fourier-Motzkin.

\subsection*{Acknowledgments}

A preliminary version of this paper appeared as a regular contributed talk for
the conference MEGA 2015 (thanks to one of the conference reviewers for
providing an improved sos representation in Example~\ref{ex:cube}).
We would like to thank all the anonymous referees for careful reading and
detailed comments.

\bibliography{hinvarxiv4}
\bibliographystyle{plain}

\end{document}